\newcommand{\multiline}[1]{%
  \begin{tabularx}{\dimexpr\linewidth-\ALG@thistlm}[t]{@{}X@{}}
    #1
  \end{tabularx}
}
\newcommand{\R}{\mathsf{R}}
\newcommand{\E}{\mathsf{E}}
\newcommand{\G}{\mathsf{G}}
\newcommand{\x}{\bm{x}}
\newcommand{\dx}{\,\mathsf{d}\bm{x}}
\newcommand{\HS}{\mathrm{H}^1_0(\Omega)}
\newcommand{\Lom}{\mathrm{L}^{2}(\Omega)}
\renewcommand{\P}{\omega}
\newcommand{\Pref}{\widetilde{\P}}
\newcommand{\T}{\mathcal{T}}
\newcommand{\V}{\mathbb{V}}
\newcommand{\Vh}{\widehat{\mathbb{V}}}
\newcommand{\X}{\mathbb{X}}
\renewcommand{\H}{\mathbb{H}}
\newcommand{\poly}{\mathbb{P}}
\newcommand{\jl}{[\![}
\newcommand{\jr}{]\!]}
\newcommand{\jmp}[1]{\jl#1\jr}
\newcommand{\norm}[1]{\left\|#1\right\|}
\newcommand{\dprod}[1]{\langle #1\rangle}
\DeclareMathOperator{\Span}{span}
\newcommand{\incNk}{\mathrm{inc}_N^n}
\newcommand{\dENk}{\Delta\E_N^n}
\DeclareMathOperator*{\argmin}{arg\,min}
\newtheorem{theorem}{Theorem}[section]
\newtheorem{lemma}[theorem]{Lemma}
\newtheorem{proposition}[theorem]{Proposition} 
\newtheorem{definition}[theorem]{Definition}
\theoremstyle{definition}
\newtheorem{remark}[theorem]{Remark}
\title[Gradient flow adaptive FEM for Schr\"{o}dingers equation]{Gradient flow finite element discretisations with energy-based adaptivity for excited states of Schr\"{o}dingers equation}
\author[P.~Heid]{Pascal Heid}
\address{Mathematical Institute, University of Oxford, Woodstock Road, Oxford OX2 6GG, UK}
\email{pascal.heid@maths.ox.ac.uk}
\thanks{The author acknowledges the financial support of the Swiss National Science Foundation (SNF), Project No. P2BEP2\underline{\space}191760.}
\begin{document}

\begin{abstract}
We present an effective numerical procedure, which is based on the computational scheme from [Heid et al., arXiv:1906.06954], for the numerical approximation of excited states of Schr\"{o}dingers equation. In particular, this procedure employs an adaptive interplay of gradient flow iterations and local mesh refinements, leading to a guaranteed energy decay in each step of the algorithm. The computational tests highlight that this strategy is able to provide highly accurate results, with optimal convergence rate with respect to the number of degrees of freedom.   

\end{abstract}

\keywords{Schr\"{o}dingers equation, excited states, gradient flows, adaptive finite element methods.}

\subjclass[2010]{35Q40, 81Q05, 65N25, 65N30, 65N50}

\maketitle

\section{Introduction}

Schr\"{o}dingers equation is the fundamental equation of physics for describing quantum mechanical behaviour, see \cite[Ch.~1]{Sakha:2020} or \cite[Ch.~1]{LinLu:2019} for an introduction to its basic theory. The time-dependent dimensionless Schr\"{o}dinger equation reads as
\begin{equation} \label{eq:timeSE}
\mathrm{i} \partial_{t} \psi(\x,t)=-\frac{1}{2} \Delta_{\x} \psi(\x,t)+V(\x) \psi(\x,t);
\end{equation}
here, $\x$ and $t$ denote the spatial and time variables, respectively, $\Delta_{\x}$ is the Laplacian in the spatial coordinates, and $\psi$ is a normalized time-dependent single-particle wavefunction. The stationary state solution of Schr\"{o}dingers equation~\eqref{eq:timeSE} can be found by solving the \emph{linear} eigenvalue problem (EVP)
\begin{equation} \label{eq:timeindSE}
-\frac{1}{2} \Delta \psi(\x) +V(\x) \psi(\x)=E \psi(\x),
\end{equation}
which is called the time-independent Schr\"{o}dinger equation. Any (normalized) eigenfunction of the EVP~\eqref{eq:timeindSE} is a quantum state of the underlying quantum mechanical system, and the corresponding eigenvalue $E$ is the total energy. Moreover, the stationary Schr\"{o}dinger equation~\eqref{eq:timeindSE} coincides with the time-independent Gross--Pitaevskii equation (GPE) for non-interacting bosons, see, e.g.,~\cite{BaoCai:2013} for a profound mathematical treatment of Bose--Einstein condensates and, in turn, of the GPE.

In this work, we restrict our focus to the following weak formulation of the time-independent Schr\"{o}dinger equation~\eqref{eq:timeindSE}: Find $E \in \mathbb{R}$ and $\psi \in \HS$ such that     
\begin{align} \label{eq:SEweak}
\int_\Omega \left(\frac{1}{2} \nabla \psi \cdot \nabla \varphi+ V(\x)\psi \varphi \right)\dx = E (\psi,\varphi)_{\Lom} \qquad \forall \varphi \in \HS,
\end{align}
where $\Omega \subset \mathbb{R}^d$, $d=\{1,2,3\}$, is a bounded, connected, and open set with Lipschitz boundary, $V\in \mathrm{L}^\infty(\Omega)$ is a potential function with $V\geq 0$ almost everywhere, and $(\cdot,\cdot)_{\Lom}$ denotes the standard $\Lom$-inner product. We further note that, upon defining the functional 
\begin{align} \label{eq:energyfunctional}
 \E(\psi):=\int_\Omega\left( \frac{1}{4} |\nabla \psi|^2 + \frac{1}{2} V(\x)|\psi|^2 \right) \dx,
\end{align}
and demanding eigenfunctions of~\eqref{eq:SEweak} to be normalized, the weak Schr\"{o}dinger equation~\eqref{eq:SEweak} is the Euler--Lagrange formulation of the constrained minimisation problem
\begin{align} \label{eq:conmin}
 \argmin_{\psi \in S_\H} \E(\psi),
\end{align}
with $S_\H:=\{\psi \in \HS: \|\psi\|_{{\rm L}^2(\Omega)}=1\}$ signifying the $\Lom$-unit sphere in~$\HS$. In particular, the weak Schr\"{o}dinger equation~\eqref{eq:SEweak} can equivalently be written as 
\begin{align*} 
\dprod{\E'(\psi),\varphi}=E (\psi,\varphi)_{{\rm L}^2(\Omega)} \qquad \forall \varphi \in \HS,
\end{align*}
with $\E'$ denoting the Fr\'{e}chet derivative and $\dprod{\cdot,\cdot}$ the duality pairing in $\mathrm{H}^{-1}(\Omega) \times \HS$. Moreover, any solution of the local minimisation problem~\eqref{eq:conmin} is an $\Lom$-normalized eigenfunction of Schr\"{o}dingers equation~\eqref{eq:SEweak}. We further note that if $\psi \in S_\H$ is an eigenfunction of \eqref{eq:SEweak} with associated eigenvalue $E$, then 
\begin{align*} 
 E=2 \E(\psi),
\end{align*}
i.e., $2 \E(\psi)$ is the energy of the quantum state $\psi$. 

Given that~$V\ge 0$ (almost everywhere in~$\Omega$), the stationary Schr\"{o}dinger equation~\eqref{eq:SEweak} has a unique ($\Lom$-normalized) positive eigenfunction~$\psi_{\mathrm{GS}}>0$, which is called the \emph{ground state}, see~\cite[Lem.~5.4]{HenningPeterseim:18}; indeed, $\psi_{\mathrm{GS}}$ is an eigenfunction to the minimal (and simple) eigenvalue, denoted by $E_{\mathrm{GS}}$, of~\eqref{eq:SEweak}, see \cite{CancesChakirMaday:10}. 

Eigenfunctions of Schr\"{o}dingers equation~\eqref{eq:SEweak} of higher (corresponding) energy are called \emph{excited states}. We emphasize that every excited state is orthogonal to the ground state, since the eigenvalue problem is linear. Indeed, it holds the following orthogonality property, which will be crucial in the analysis below.

\begin{proposition} \label{prop:orthogonal}
If $\psi_{1}$ and $\psi_{2}$ are two eigenfunctions of Schr\"{o}dingers equation~\eqref{eq:SEweak} to distinct eigenvalues $E_1$ and $E_2$, respectively, then $\psi_1$ and $\psi_2$ are orthogonal with respect to the $\Lom$-inner product, i.e.~$(\psi_1,\psi_2)_{\Lom}=0$.
\end{proposition}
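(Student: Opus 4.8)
The plan is to exploit the symmetry of the weak form in~\eqref{eq:SEweak}. Write $a(\psi,\varphi) := \int_\Omega\left(\frac12 \nabla\psi\cdot\nabla\varphi + V(\x)\psi\varphi\right)\dx$ for the bilinear form appearing on the left-hand side of~\eqref{eq:SEweak}; since the integrand is symmetric under interchange of $\psi$ and $\varphi$, the form $a$ is symmetric, and so is the $\Lom$-inner product. In this notation the hypothesis reads $a(\psi_i,\varphi)=E_i(\psi_i,\varphi)_{\Lom}$ for all $\varphi\in\HS$, with $i=1,2$.

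First I would observe that each eigenfunction is itself an admissible test function, because $\psi_1,\psi_2\in\HS$ by the definition of the weak problem. Thus I may test the equation for $\psi_1$ with the choice $\varphi=\psi_2$, and the equation for $\psi_2$ with $\varphi=\psi_1$, obtaining the two scalar identities $a(\psi_1,\psi_2)=E_1(\psi_1,\psi_2)_{\Lom}$ and $a(\psi_2,\psi_1)=E_2(\psi_2,\psi_1)_{\Lom}$.

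Next I would invoke symmetry. Because $a(\psi_1,\psi_2)=a(\psi_2,\psi_1)$ and $(\psi_1,\psi_2)_{\Lom}=(\psi_2,\psi_1)_{\Lom}$, subtracting the second identity from the first makes the left-hand sides cancel and leaves $0=(E_1-E_2)(\psi_1,\psi_2)_{\Lom}$. Since $E_1\neq E_2$ by assumption, the scalar factor $E_1-E_2$ is nonzero and may be divided out, yielding $(\psi_1,\psi_2)_{\Lom}=0$, which is precisely the asserted orthogonality.

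I do not expect a genuine obstacle here: the argument is the classical self-adjointness trick for symmetric eigenvalue problems. Everything rests on two facts that are immediate from the setup, namely the symmetry of the form $a$ (a consequence of the symmetric structure of the integrand in~\eqref{eq:SEweak}) and the fact that eigenfunctions lie in the same space $\HS$ as the test functions, so that each eigenfunction can legitimately be substituted into the other's weak equation. The only point worth stating carefully is this admissibility of cross-testing; once it is in place, the conclusion follows by a single subtraction.
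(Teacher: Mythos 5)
Your proof is correct and follows essentially the same route as the paper's: cross-testing each eigenfunction's weak equation with the other, invoking the symmetry of the bilinear form (which the paper writes as $\dprod{\E'(\psi),\varphi}$) together with the symmetry of the $\Lom$-inner product, and dividing by $E_1-E_2\neq 0$. No gaps.
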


\begin{proof}
First, we note that
\[\dprod{\E'(\psi_1),\psi_2}=\int_\Omega \left(\frac{1}{2} \nabla \psi_1 \cdot \nabla \psi_2+ V(\x)\psi_1 \psi_2\right)\dx=\dprod{\E'(\psi_2),\psi_1}.\]
Invoking that $\psi_1$ and $\psi_2$ are eigenfunctions of~\eqref{eq:SEweak}, this leads to
\begin{align*}
E_1 (\psi_1,\psi_2)_{\Lom}=\dprod{\E'(\psi_1),\psi_2}=\dprod{\E'(\psi_2),\psi_1}=E_2 (\psi_2,\psi_1)_{\Lom}.
\end{align*}
Then, the symmetry of the $\Lom$-inner product yields $(E_1-E_2) (\psi_1,\psi_2)_{\Lom}=0$, and thus, since $E_1 \neq E_2$, we conclude that $(\psi_1,\psi_2)_{\Lom}=0$.
\end{proof}

Finally we emphasize that every eigenspace has an orthonormal basis consisting of corresponding eigenfunctions.

\subsection*{State-of-the-art of numerical methods for Schr\"{o}dingers equation and GPE}

The stationary quantum states can either be found 
\begin{enumerate}[(i)] 
\item by solving the corresponding time-independent Schr\"{o}dinger equation~\eqref{eq:timeindSE} directly,
\item or by minimisation of the energy functional~\eqref{eq:energyfunctional} (under the $\Lom$-normalization constraint). 
\end{enumerate}

In the context of (i), classical spatial discretisation approaches such as finite element methods~\cite{bao2003ground, gong2008finite, xie2016multigrid, raza2009energy}, or finite difference schemes~\cite{chien2008two,raza2009energy} can be applied. Further approaches for solving~\eqref{eq:timeindSE} directly are, for example, given by Fourier methods~\cite{bao2003numerical, CancesChakirMaday:10}.

For the minimisation of the corresponding energy functional~\eqref{eq:energyfunctional} we mainly point to the gradient flow methods~\cite{bao2010efficient, BaoDu:04,zeng2009efficiently, HenningPeterseim:18,bao2006efficient, danaila2010new, kazemi2010minimizing, raza2009energy}. However, there is a wide range of other numerical methods in the scope of~(ii), such as the recently proposed preconditioned conjugate gradient method~\cite{antoine2017efficient} or the direct energy minimisation in the presence of symmetry properties from~\cite{bao2003ground}. We remark that most of those schemes were rather designed for the Gross--Pitaevskii equation than for Schr\"{o}dingers equation, but can also be applied for the latter. Moreover, we note that this list is far from complete.  

\subsection*{Contribution}

The aim of this work is to provide a numerical approximation procedure for the excited states of Schr\"{o}dingers equation~\eqref{eq:SEweak}, based on the simultaneous interplay of gradient flow iterations (GFI) and local adaptive finite element mesh refinements; this idea follows the recent developments on the (adaptive) \emph{iterative linearised Galerkin (ILG)} methodology~\cite{HeidWihler:20,HeidWihler:19v2,HeidWihler2:19v1,HeidStammWihler:19,HeidPraetoriusWihler:2020,CongreveWihler:17,AmreinWihler:14,AmreinWihler:15,HoustonWihler:18}, whereby adaptive discretisations and iterative nonlinearity solvers are combined in an intertwined way; we also refer to the closely related works~\cite{ErnVohralik:13,El-AlaouiErnVohralik:11,BernardiDakroubMansourSayah:15,GarauMorinZuppa:11}. 

For the purpose of our work, we will closely follow the recent paper~\cite{HeidStammWihler:19}, which proposes a computational procedure for the ground state of the Gross--Pitaevskii equation. The numerical scheme from~\cite{HeidStammWihler:19} applies an effective interplay of the gradient flow iteration method from~\cite{HenningPeterseim:18} and adaptive local mesh refinements, and thereby generates a sequence of finite element approximations defined on adaptively refined spaces which provide a corresponding sequence of monotonically decreasing energies. In the first part of this work, we will examine the gradient flow from~\cite{HenningPeterseim:18}, which was employed in the numerical scheme from~\cite{HeidStammWihler:19}, with regard to excited states of the stationary Schr\"{o}dinger equation~\eqref{eq:SEweak}. In particular, we will show that the gradient flow iteration from~\cite{HenningPeterseim:18} is not only highly suited for the ground state of the GPE, but also for excited states of Schr\"{o}dingers equation. For that purpose, we will take advantage of the orthogonality of eigenfunctions to distinct eigenvalues, which gives rise to another (linear) constraint of our optimisation problem~\eqref{eq:conmin}. Subsequently, we will use an orthogonal projection of the gradient onto the tangent space of the manifold defined by the constraints; this follows the idea in~\cite{Tanabe:1980}, or more recently in~\cite{feppon:19}. Incidentally, we will end up with the very same GFI as proposed in~\cite{HenningPeterseim:18}. However, for practical computations, a minor adjustment is presented, which, as a matter of fact, does theoretically not change the flow at all. Once the modified gradient flow is established, the adaptive interplay of gradient flow iterations and finite element discretisations is, in principal, the same as in~\cite{HeidStammWihler:19}. 

\subsection*{Outline}

In Section~\ref{sec:gradientflow} we introduce the Sobolev gradient flow and justify its use for the approximation of \emph{excited} states of Schr\"{o}dingers equation. Subsequently, Section~\ref{sec:FEM} deals with the finite element discretisation, and the adaptive interplay of the two schemes. Some numerical experiments in the two dimensional physical space are conducted in Section~\ref{sec:numerics}. Finally, we will add some concluding remarks in Section~\ref{sec:conclusion}.

\section{Sobolev gradient flow for excited states of Schr\"{o}dingers equation} \label{sec:gradientflow}

In this section, we will examine the Sobolev gradient flow from~\cite{HenningPeterseim:18} with regard to \emph{excited states} of Schr\"{o}dingers equation~\eqref{eq:SEweak}. For that purpose, we will first consider a general \emph{constrained} optimisation problem, and subsequently apply the findings in the specific context of Schr\"{o}dingers equations. 

\subsection{Dynamcial system approach for constrained minimisation problems}

Let us first consider the \emph{unconstrained} optimisation problem
\begin{align*} 
\argmin_{u \in \H} \E(u), 
\end{align*}
where $\H$ is any Hilbert space, and $\E:\H \to \mathbb{R}$ a Fr\'{e}chet differentiable functional. The dynamical system approach for the minimisation of the functional $\E$ in $\H$ is based on the gradient flow
\begin{align} \label{eq:gradientflow}
 \dot u(t)=-\nabla \E(u(t)),
\end{align}
where $\nabla \E(u) \in \H$, for fixed $u \in \H$, is the unique element in the Hilbert space $\H$ such that
\begin{align*}
 (\nabla \E(u),v)_\H=\dprod{\E'(u),v} \qquad \forall v \in \H;
\end{align*}
we emphasize that $\nabla \E \in \H$ is well-defined by the Riesz representation theorem.
The fundamental idea of this approach is that the flow should always move in direction of the steepest descent of the energy $\E$, thus leading to an energy decay along the path $u(t)$, and finally ending up in a local minimum.

Now let us consider a \emph{constrained} optimisation problem of the form
\begin{align*} 
\argmin_{u \in \H} \E(u) \ \ \text{s.t.} \ \ g(u)=0,
\end{align*}
where the constraint $g:\H \to \mathbb{R}^{m+1}$ is (componentwise) Fr\'{e}chet differentiable. We have to adapt the dynamical system~\eqref{eq:gradientflow} in such a way that $u(t)$ satisfies the constraints for all admissible $t$, i.e.~$g(u(t))=0$, given that $g(u(0))$. Following the ideas in~\cite{Tanabe:1980}, we project the gradient onto the tangent space of the set $S:=\{v \in \H: g(v)=0\} \subseteq \H$; here, the tangent space of $S$ at some point $u \in S$, denoted by $T_uS$, is given by the kernel of $g'(u):\H \to \mathbb{R}^{m+1}$, i.e., $T_uS:=\ker{\left(g'(u)\right)}:=\{v \in \H: g'(u)v =0\}$. For given $u \in S$, we denote the projection onto the tangent space by $P_{g,u}:\H \to T_uS$. Then, the modified dynamical system is given by
\begin{align*} 
 \dot u(t)=-P_{g,u(t)}(\nabla \E(u(t))).
\end{align*}
This approach was proposed by Tanabe, see~\cite{Tanabe:1980}, only for $\H=\mathbb{R}^k$ with $k \in \mathbb{N}$. We refer to~\cite{feppon:19} for the advancement of this idea for infinite dimensional Hilbert spaces; indeed, in order to define the orthogonal projection operator $P_{g,u}$ in the following, we will follow the lines in~\cite{feppon:19}. 

\begin{definition}
For any $u \in \H$, we define the transpose $g'(u)^\tau:\mathbb{R}^{m+1} \to \H$ of $g'(u):\H \to \mathbb{R}^{m+1}$ by
\begin{align} \label{eq:transpose}
 (g'(u)^\tau z,v)_\H=z \cdot g'(u)v \qquad \forall z \in \mathbb{R}^{m+1}, \ \forall v \in \H, 
\end{align}
where $z_1 \cdot z_2$ denotes the Euclidean inner product of two elements $z_1,z_2 \in \mathbb{R}^{m+1}$.
\end{definition}

\begin{remark}
 (1) For any $u \in \H$, the transpose $g'(u)^\tau:\mathbb{R}^{m+1} \to \H$ is well-defined and linear by the Riesz representation theorem.\\
 (2) If $\H=\mathbb{R}^k$ endowed with the standard Euclidean inner product, then $g'(u) \in \mathbb{R}^{m+1 \times k}$, and $g'(u)^\tau \in \mathbb{R}^{k \times m+1}$ is the usual matrix transpose.
\end{remark}

For the sake of defining the orthogonal projection operator $P_{g,u}$ we need the following \emph{qualification condition}:
\begin{enumerate}[(QC)]
 \item A point $u \in \H$ satisfies the qualification condition (QC), if $g'(u)g'(u)^\tau:\mathbb{R}^{m+1} \to \mathbb{R}^{m+1}$ is invertible. 
\end{enumerate}

Then, given that $u \in \H$ satisfies (QC), the orthogonal projection is defined by 
\begin{align} \label{eq:orthogonalprojection}
P_{g,u}:=\mathrm{I}_\H-g'(u)^\tau \left(g'(u) g'(u)^\tau\right)^{-1}g'(u),
\end{align}
where $\mathrm{I}_\H: \H \to \H$ denotes the identity mapping on $\H$. Consequently, the projected gradient flow reads as 
\begin{align} \label{eq:projectedgradientflow}
 \dot u(t)= - \nabla u(t) + g'(u(t))^\tau\left(g'(u(t))g'(u(t))^\tau\right)^{-1} g'(u(t))\nabla u(t).
 \end{align}

\begin{proposition} \label{prop:Projection}
 Let $u \in \H$ be a point satisfying the qualification condition (QC). Then the following holds:
 \begin{enumerate}[(1)]
  \item The operator $P_{g,u}: \H \to \H$ from~\eqref{eq:orthogonalprojection} is the orthogonal projection onto $T_uS=\ker{\left(g'(u)\right)}$ with respect to the $\H$-inner product. 
  \item Assuming that $u(t)$ from \eqref{eq:projectedgradientflow} is well-defined for all $t \in [0,T)$, for some $T>0$, and $g(u(0))=0$, then $g(u(t))=0$.  
 \end{enumerate}
\end{proposition}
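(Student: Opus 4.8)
The plan is to treat the two assertions separately: part (1) through the geometric characterisation of an orthogonal projection, and part (2) through differentiating the constraint along the flow, which reduces to the structural identity $g'(u)P_{g,u}=0$ obtained in (1).

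For part (1), I would abbreviate $A:=g'(u):\H\to\mathbb{R}^{m+1}$ and $A^\tau:=g'(u)^\tau$, so that $P_{g,u}=\mathrm{I}_\H-A^\tau(AA^\tau)^{-1}A$; the qualification condition (QC) guarantees that $AA^\tau$ is invertible, so that $P_{g,u}$ is well-defined. Recall that a bounded linear operator $P$ on $\H$ is the orthogonal projection onto a closed subspace $M$ if and only if $Pv\in M$ and $v-Pv\perp M$ for every $v\in\H$; since the Fr\'echet derivative $g'(u)$ is bounded, $\ker(A)=T_uS$ is closed and this characterisation applies. The first condition follows from the cancellation $AP_{g,u}=A-(AA^\tau)(AA^\tau)^{-1}A=A-A=0$, which shows $P_{g,u}v\in\ker(A)$ for all $v$. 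For the second condition I would write $v-P_{g,u}v=A^\tau z$ with $z:=(AA^\tau)^{-1}Av$, and invoke the defining identity~\eqref{eq:transpose}: for any $w\in\ker(A)$,
\begin{align*}
(v-P_{g,u}v,w)_\H=(A^\tau z,w)_\H=z\cdot(Aw)=z\cdot 0=0.
\end{align*}
Hence $v-P_{g,u}v$ is $\H$-orthogonal to $\ker(A)$, which establishes that $P_{g,u}$ is the orthogonal projection onto $T_uS$.

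For part (2), the payoff from (1) is the identity $g'(u)P_{g,u}=AP_{g,u}=0$, i.e.~the range of $P_{g,u}$ lies in $\ker(g'(u))$. I would introduce $\phi(t):=g(u(t))\in\mathbb{R}^{m+1}$ and differentiate componentwise via the chain rule for the Fr\'echet derivative, which is justified because $u(t)$ is assumed well-defined, hence differentiable, on $[0,T)$ and $g$ is Fr\'echet differentiable. This gives
\begin{align*}
\dot\phi(t)=g'(u(t))\dot u(t)=-g'(u(t))P_{g,u(t)}\bigl(\nabla\E(u(t))\bigr)=0,
\end{align*}
the last equality being $g'(u)P_{g,u}=0$ evaluated along the trajectory. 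Thus $\phi$ is constant on $[0,T)$, and since $\phi(0)=g(u(0))=0$ by hypothesis, we conclude $g(u(t))=0$ for all $t\in[0,T)$.

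The algebraic cancellation and the chain rule are routine; the step I would regard as the genuine content is the orthogonality $v-P_{g,u}v\perp\ker(g'(u))$ in part (1), since this is precisely where the definition of the transpose $g'(u)^\tau$ and the corrective factor $(AA^\tau)^{-1}$ are used essentially. Everything else, in particular the constraint preservation in part (2), follows mechanically from the identity $g'(u)P_{g,u}=0$.
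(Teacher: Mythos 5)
Your proof is correct, and its core computations are the same ones the paper uses: the transpose identity~\eqref{eq:transpose} together with $g'(u)w=0$ for $w\in\ker(g'(u))$ gives the orthogonality of the residual $v-P_{g,u}v$, and the cancellation $g'(u)g'(u)^\tau\left(g'(u)g'(u)^\tau\right)^{-1}=\mathrm{I}_{\mathbb{R}^{m+1}}$ kills the derivative of the constraint in part (2). The one genuine difference lies in which characterisation of the orthogonal projection you verify in part (1): you establish $g'(u)P_{g,u}=0$, i.e.\ that the range of $P_{g,u}$ lies in $\ker(g'(u))$, together with $v-P_{g,u}v\perp\ker(g'(u))$; the paper instead checks that $P_{g,u}$ fixes $\ker(g'(u))$ pointwise, together with the same orthogonality of the residual. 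Strictly speaking, your pair of conditions is the airtight one: fixing the kernel plus orthogonality of the residual does not by itself force $P_{g,u}v$ into the kernel --- on $\mathbb{R}^2$ with $M=\mathrm{span}\{e_1\}$, the map $P(x,y)=(x,2y)$ fixes $M$ and satisfies $v-Pv\perp M$, yet is not a projection at all --- so the paper's write-up tacitly relies on the range containment that you prove explicitly. Your organisation also lets the single identity $g'(u)P_{g,u}=0$ do double duty, since part (2) then follows in one line, whereas the paper re-runs the algebraic cancellation inside the derivative computation; the mathematical content is the same, but your version is slightly tighter and more complete.
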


\begin{proof}
 Ad (1): It is easily verified that $P_{g,u}$ is linear, and obviously it holds that $P_{g,u}(v)=v$ for all $v \in \ker{\left(g'(u)\right)}$. Moreover, for any $v \in \H$ and $w \in \ker{\left(g'(u)\right)}$, we have
 \begin{align*}
  (v-P_{g,u}(v),w)_\H=(v,w)_\H-(v,w)_\H+g'(u)^\tau \left(g'(u)g'(u)^\tau\right)^{-1}\underbrace{g'(u)w}_{=0}=0.
 \end{align*}
 Ad (2): Applying $g'(u(t))$ to the flow~\eqref{eq:projectedgradientflow} from the left, and noting that $g'(u(t)) \dot u(t)=\frac{d}{d t} g(u(t))$, we find that  
 \[\frac{d}{d t} g(u(t)) = -g'(u(t)) \nabla u(t)+\underbrace{g'(u(t))g'(u(t))^\tau\left(g'(u(t))g'(u(t))^\tau\right)^{-1}}_{=\mathrm{I}_{\mathbb{R}^{m+1}}} g'(u(t))\nabla u(t)=0,\]
 hence $g(u(t))$ is constant. This, together with $g(u(0))=0$, implies the claim.
\end{proof} 

\subsection{Continuous gradient flow for Schr\"{o}dingers equation} \label{sec:continuousflow}

In this subsection, we will apply the general approach from before in the context of Schr\"{o}dingers equation~\eqref{eq:SEweak}. For that purpose, we will rather consider the corresponding optimisation problem~\eqref{eq:conmin} than the linear eigenvalue problem~\eqref{eq:SEweak}.\\

Assume that some eigenfunctions $\psi_1,\dotsc,\psi_m$ of Schr\"{o}dingers equation~\eqref{eq:SEweak} with associated eigenvalues~$E_1,\dotsc,E_m$ are known; without loss of generality we may assume that those eigenvectors are orthonormal with respect to the $\Lom$-inner product. Then, in order to find a new eigenfunction of~\eqref{eq:SEweak}, we have to further restrict the constraint optimisation problem~\eqref{eq:conmin} to the orthogonal complement of the space spanned by the eigenfunctions $\psi_1,\dotsc,\psi_m$, cf.~Proposition~\ref{prop:orthogonal}. In particular, we consider the constrained optimisation problem
\begin{align} \label{eq:constrainedproblem}
\argmin_{\psi \in \HS} \E(\psi) \ \ \text{s.t.} \ \ g(\psi)=0,
\end{align}
where $\E$ is given as in~\eqref{eq:energyfunctional}, and the constraint $g:\HS \to \mathbb{R}^{m+1}$ is defined by
\begin{align} \label{eq:constraint}
g(\psi):=\left(\norm{\psi}_{\Lom}-1,(\psi_1,\psi)_{\Lom},\dotsc,(\psi_m,\psi)_{\Lom}\right)^t;
\end{align}
here, $^t:\mathbb{R}^{k \times l} \to \mathbb{R}^{l \times k}$ denotes the usual transpose operator of a real-valued matrix.

We will now employ the general approach from before to the specific constrained optimisation problem~\eqref{eq:constrainedproblem}. First of all, a straightforward calculation reveals that, for any $\psi,\varphi \in \HS$,
\begin{align} \label{eq:gderivative}
 g'(\psi)\varphi=((\psi,\varphi)_{\Lom},(\psi_1,\varphi)_{\Lom},\dotsc,(\psi_m,\varphi)_{\Lom})^t.
\end{align}
Moreover, in view of the Sobolev gradient flow from~\cite{HenningPeterseim:18} employed in~\cite{HeidStammWihler:19}, we consider the inner product 
\begin{align} \label{eq:innerproductt}
 (\psi,\varphi)_{\H}:= \int_\Omega \frac{1}{2} \nabla \psi \nabla \varphi+ V \psi \varphi \dx \qquad \forall \psi,\varphi \in \HS
\end{align}
on $\H:=\HS$. We emphasize that the space $\HS$ endowed with the inner product from~\eqref{eq:innerproductt} is indeed a Hilbert space. Additionally, it holds that
\begin{align} \label{eq:eprime}
 \dprod{\E'(\psi),\varphi}=(\psi,\varphi)_\H \qquad \forall \psi,\varphi \in \H.
\end{align}
Consequently, the gradient of $\E$ at any $\psi \in \HS$ with respect to the $\H$-inner product is simply given by 
\begin{align} \label{eq:Hgrad}
\nabla \E(\psi)=\psi. 
\end{align} 
In order to explicitly state the dynamical system ~\eqref{eq:projectedgradientflow} in the specified setting, we further define the operator $\G:\HS \to \HS$ by 
\begin{align} \label{eq:Gdef}
 (\G(\psi),\varphi)_{\H}=(\psi,\varphi)_{\Lom} \qquad \forall \psi,\varphi \in \HS,
\end{align}
which is a linear operator by the Riesz representation theorem. 

As a first step towards the explicit formulation of the gradient flow, we will determine $g'(\psi)^{\tau} \phi$, cf.~\eqref{eq:transpose}, for given $\psi \in \HS$ and $\phi=(\phi_0,\phi_1,\dotsc,\phi_m)^{t} \in \mathbb{R}^{m+1}$: By definition, it holds that 
\begin{align*}
 (g'(\psi)^t \phi,\varphi)_{\H}&\stackrel{\eqref{eq:transpose}}{=}\phi \cdot g'(\psi)\varphi \\ &\stackrel{\eqref{eq:gderivative}}{=}\phi \cdot ((\psi,\varphi)_{\Lom},(\psi_1,\varphi)_{\Lom},\dotsc,(\psi_m,\varphi)_{\Lom})^{t} \\
 &=\phi_0 (\psi,\varphi)_{\Lom}+\sum_{i=1}^m \phi_i (\psi_i,\varphi)_{\Lom}
\end{align*}
for all $\varphi \in \HS$. Consequently, it follows from~\eqref{eq:Gdef} that
\begin{align} \label{eq:gtransposeGP}
 g'(\psi)^{\tau} \phi = \phi_0 \G(\psi)+\sum_{i=1}^m \phi_i  \G(\psi_i).
\end{align}
Next, by invoking \eqref{eq:gderivative} again, a straight forward calculation reveals that
\begin{align} \label{eq:ggt1}
 g'(\psi)g'(\psi)^{\tau}=\begin{pmatrix} (\mathsf{G}(\psi),\psi)_{\Lom} & (\G(\psi_1),\psi)_{\Lom} & \dotsc & (\G(\psi_m),\psi)_{\Lom} \\ (\mathsf{G}(\psi),\psi_1)_{\Lom} & (\G(\psi_1),\psi_1)_{\Lom} & \dotsc & (\G(\psi_m),\psi_1)_{\Lom} \\ \vdots & \vdots & \ddots & \vdots \\ (\mathsf{G}(\psi),\psi_m)_{\Lom} & (\G(\psi_1),\psi_m)_{\Lom} & \dotsc & (\G(\psi_m),\psi_m)_{\Lom} \end{pmatrix},
\end{align}
for any $\psi \in \HS$. Recall that $\psi_1,\dotsc,\psi_m$ are eigenfunctions of Schr\"{o}dingers equation~\eqref{eq:SEweak} to the eigenvalues $E_1,\dotsc,E_m$. Therefore, for any $i \in \{1,\dotsc,m\}$, it holds that
\begin{align*} 
 (\psi_i,\varphi)_\H \overset{\eqref{eq:eprime}}{=} \dprod{\E'(\psi_i),\varphi} = E_i(\psi_i,\varphi)_{\Lom}\overset{\eqref{eq:Gdef}}{=}E_i(\G(\psi_i),\varphi)_\H \qquad \forall \varphi \in \HS,
\end{align*}
and thus 
\begin{align} \label{eq:Gu}
 \G(\psi_i)=\frac{\psi_i}{E_i}.
\end{align}
Since, in addition, $\{\psi_1,\dotsc,\psi_m\}$ is an $\Lom$-orthonormal set in $\HS$, the matrix from~\eqref{eq:ggt1} can be simplified:
\begin{align*} 
g'(\psi)g'(\psi)^\tau=\begin{pmatrix} (\mathsf{G}(\psi),\psi)_{\Lom} & \frac{(\psi_1,\psi)_{\Lom}}{E_1} & \dotsc & \frac{(\psi_m,\psi)_{\Lom}}{E_m} \\ (\mathsf{G}(\psi),\psi_1)_{\Lom} & \frac{1}{(E_1)^2} & \dotsc & 0 \\ \vdots & \vdots & \ddots & \vdots \\ (\mathsf{G}(\psi),\psi_m)_{\Lom} & 0 & \dotsc & \frac{1}{(E_m)^2} \end{pmatrix}.
\end{align*}
If, moreover, $\psi$ is orthogonal to $\psi_1,\dotsc,\psi_m$, then we further find that
\begin{align} \label{eq:selfadjoint}
(\mathsf{G}(\psi),\psi_i)_{\Lom}\overset{\eqref{eq:Gdef}}{=}(\mathsf{G}(\psi),\mathsf{G}(\psi_i))_\H\overset{\eqref{eq:Gdef}}{=}(\psi,\mathsf{G}(\psi_i))_{\Lom}\overset{\eqref{eq:Gu}}{=}\frac{1}{E_i}(\psi,\psi_i)_{\Lom}=0,
\end{align}
$i \in \{1,\dotsc,m\}$, and thus 
\begin{align} \label{eq:ggt3}
\left(g'(\psi)g'(\psi)^\tau\right)^{-1}=\begin{pmatrix} (\mathsf{G}\psi,\psi)^{-1}_{\Lom} & 0 & \dotsc & 0 \\ 0 & {(E_1)^2} & \dotsc & 0 \\ \vdots & \vdots & \ddots & \vdots \\ 0 & 0 & \dotsc & {(E_m)^2} \end{pmatrix}; 
\end{align}
in particular, any $\psi \in \HS$ with $g(\psi)=0$ satisfies the qualification condition (QC). This paves the way for the following result:

\begin{proposition}
Let $g,\E,\G, \H$ and $(\cdot,\cdot)_\H$ be defined as in the given Section~\ref{sec:continuousflow}. Assuming that the flow from~\eqref{eq:projectedgradientflow}, with $\psi(t)$ in place of $u(t)$, is well-defined for $t \in [0,T)$ with $g(\psi(0))=0$, then the dynamical system~\eqref{eq:projectedgradientflow} can be written in simplified form:
 \begin{align} \label{eq:simplifiedflow}
  \dot \psi(t)=-\psi(t)+\frac{1}{(\mathsf{G}(\psi(t)),\psi(t))_{\Lom}} \mathsf{G}(\psi(t)), \qquad t \in [0,T). 
\end{align}
\end{proposition}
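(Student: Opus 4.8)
The plan is to substitute the explicit expressions derived throughout Section~\ref{sec:continuousflow} directly into the abstract projected gradient flow~\eqref{eq:projectedgradientflow} and to exploit the constraint preservation guaranteed by Proposition~\ref{prop:Projection}. First I would invoke Proposition~\ref{prop:Projection}(2): since $g(\psi(0))=0$ and the flow is assumed well-defined on $[0,T)$, it follows that $g(\psi(t))=0$ for every $t \in [0,T)$. This is the decisive observation, because it means that along the entire trajectory the solution satisfies $\norm{\psi(t)}_{\Lom}=1$ and is $\Lom$-orthogonal to each $\psi_i$, so that all the simplifications established earlier---in particular~\eqref{eq:selfadjoint} and the resulting diagonal form of the inverse matrix~\eqref{eq:ggt3}---are valid at every time $t$, and not merely at $t=0$.

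With this in hand, I would compute the three ingredients of~\eqref{eq:projectedgradientflow} in turn. By~\eqref{eq:Hgrad} the $\H$-gradient is simply $\nabla\E(\psi(t))=\psi(t)$. Applying~\eqref{eq:gderivative} and using $g(\psi(t))=0$ then yields $g'(\psi(t))\psi(t)=((\psi(t),\psi(t))_{\Lom},(\psi_1,\psi(t))_{\Lom},\dotsc,(\psi_m,\psi(t))_{\Lom})^t=(1,0,\dotsc,0)^t$, i.e.\ the first Euclidean basis vector. Multiplying by the diagonal inverse~\eqref{eq:ggt3} therefore picks out only its top-left entry, giving $\left(g'(\psi(t))g'(\psi(t))^\tau\right)^{-1}g'(\psi(t))\psi(t)=\left((\G(\psi(t)),\psi(t))_{\Lom}^{-1},0,\dotsc,0\right)^t$.

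Finally I would apply the transpose formula~\eqref{eq:gtransposeGP} to this vector. Since only its zeroth component $\phi_0=(\G(\psi(t)),\psi(t))_{\Lom}^{-1}$ is nonzero, the sum over $i=1,\dotsc,m$ drops out and we are left with $g'(\psi(t))^\tau\left(g'(\psi(t))g'(\psi(t))^\tau\right)^{-1}g'(\psi(t))\psi(t)=(\G(\psi(t)),\psi(t))_{\Lom}^{-1}\G(\psi(t))$. Substituting this together with $\nabla\E(\psi(t))=\psi(t)$ back into~\eqref{eq:projectedgradientflow} gives precisely the asserted form~\eqref{eq:simplifiedflow}.

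I do not anticipate a genuine obstacle: once the constraint preservation is noted, the argument is a direct assembly of the identities~\eqref{eq:Hgrad},~\eqref{eq:gderivative},~\eqref{eq:ggt3}, and~\eqref{eq:gtransposeGP}. The only point requiring care is the logical one, namely that both~\eqref{eq:ggt3} and the reduction $g'(\psi(t))\psi(t)=(1,0,\dotsc,0)^t$ rely on $g(\psi(t))=0$ holding throughout $[0,T)$, which is exactly what Proposition~\ref{prop:Projection}(2) supplies; one should also keep in mind that the $\nabla u(t)$ appearing in~\eqref{eq:projectedgradientflow} is shorthand for $\nabla\E(\psi(t))$, which in this setting collapses to $\psi(t)$ by~\eqref{eq:Hgrad}.
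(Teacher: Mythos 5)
Your proposal is correct and follows essentially the same route as the paper's own proof: invoke Proposition~\ref{prop:Projection}(2) to propagate the constraint $g(\psi(t))=0$ along the trajectory, then assemble~\eqref{eq:Hgrad}, \eqref{eq:gderivative}, \eqref{eq:ggt3}, and~\eqref{eq:gtransposeGP} to collapse the projection term. The only cosmetic difference is that you use the normalization $(\psi(t),\psi(t))_{\Lom}=1$ immediately when computing $g'(\psi(t))\psi(t)$, whereas the paper carries this factor along and cancels it in the final step.
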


\begin{proof}
 Since $g(\psi(0))=0$, Proposition~\ref{prop:Projection} yields that $\psi(t)$ is orthogonal to $\psi_1,\dotsc,\psi_m$ for all $t \in [0,T)$, and thus \eqref{eq:ggt3} holds true for $\psi=\psi(t)$, $t \in [0,T)$. Moreover, it follows from \eqref{eq:gderivative} that \begin{align} \label{eq:gpsi}
g'(\psi(t))\psi(t)=((\psi(t),\psi(t))_{\Lom},0,\dotsc,0)^t, \qquad t \in [0,T).
\end{align} 
Then, invoking~\eqref{eq:Hgrad},~\eqref{eq:gpsi}, \eqref{eq:ggt3}, and~\eqref{eq:gtransposeGP}, the dynamical system~\eqref{eq:projectedgradientflow} reads as
 \begin{align*}
  \dot \psi(t)&=-\psi(t)+g'(\psi(t))^{\tau} \left(\frac{(\psi(t),\psi(t))_{\Lom}}{(\mathsf{G}\psi(t),\psi(t))_{\Lom}},0,\dotsc,0\right)^t =-\psi(t)+\frac{(\psi(t),\psi(t))_{\Lom}}{(\mathsf{G}\psi(t),\psi(t))_{\Lom}} \G(\psi(t)).
 \end{align*}
Finally, as $g(\psi(t))=0$ for all $t \in [0,T)$, it especially holds that $(\psi(t),\psi(t))_{\Lom}=1$, and therefore~\eqref{eq:simplifiedflow} is established.
\end{proof}

We note that, incidentally, ~\eqref{eq:simplifiedflow} coincides with the Sobolev gradient flow from~\cite{HenningPeterseim:18}, and thus shares its features. We summarize its most important properties in the following theorem, which is basically Theorem 3.2.~from~\cite{HenningPeterseim:18}; we also refer to that reference for its proof. 

\begin{theorem} \label{thm:contflow}
Let $\psi(0) \in \HS$ such that $g(\psi(0))=0$. Then, the projected Sobolev gradient flow from~\eqref{eq:simplifiedflow} is well-defined for all $t \geq 0$, and it is energy dissipative, i.e.~$\E(\psi(t)) \leq \E(\psi(s))$ for all $0 \leq s \leq t < \infty$. Moreover, $\psi(t)$ converges in $\H$ to an $\Lom$-normalized eigenfunction $\psi^\star \in \Span\{\psi_1,\dotsc,\psi_m\}^\perp$, where $^\perp$ denotes the orthogonal complement with respect to the $\Lom$-inner product, of~\eqref{eq:SEweak} with corresponding eigenvalue
 \begin{align} \label{eq:eigenvalue}
 E^\star:=\frac{1}{(\G(\psi^\star),\psi^\star)_{\Lom}}=(\psi^\star,\psi^\star)_{\H}.
 \end{align} 
\end{theorem}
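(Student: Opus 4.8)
The plan is to follow the strategy behind Theorem 3.2 of~\cite{HenningPeterseim:18}, treating~\eqref{eq:simplifiedflow} as an autonomous ODE on the Hilbert space $\H=\HS$ and establishing the three assertions in turn: well-posedness, energy dissipation, and convergence. For local existence I would invoke the Picard--Lindel\"of theorem. The right-hand side $F(\psi):=-\psi+(\G(\psi),\psi)_{\Lom}^{-1}\G(\psi)$ is the sum of a bounded linear map and a smooth scalar multiple of the bounded linear operator $\G$, hence locally Lipschitz on the open set where the denominator does not vanish. Since $(\G(\psi),\psi)_{\Lom}=\norm{\G(\psi)}_\H^2$ by~\eqref{eq:Gdef}, this denominator is strictly positive for every $\psi\neq0$, so $F$ is locally Lipschitz on a neighbourhood of the constraint set and a local solution exists.

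Energy dissipation I would obtain by a direct computation: using~\eqref{eq:eprime} and the flow~\eqref{eq:simplifiedflow},
\begin{align*}
\frac{d}{dt}\E(\psi(t))=\dprod{\E'(\psi),\dot\psi}=(\psi,\dot\psi)_\H=-\norm{\psi}_\H^2+\frac{(\psi,\G(\psi))_\H}{(\G(\psi),\psi)_{\Lom}}.
\end{align*}
Because $g(\psi(t))=0$ is preserved by Proposition~\ref{prop:Projection}, we have $(\psi,\G(\psi))_\H=(\psi,\psi)_{\Lom}=1$ along the flow, while the Cauchy--Schwarz inequality in $\H$ combined with $\norm{\G(\psi)}_\H^2=(\G(\psi),\psi)_{\Lom}$ gives $1=(\G(\psi),\psi)_\H\le\norm{\G(\psi)}_\H\norm{\psi}_\H$, i.e.\ $(\G(\psi),\psi)_{\Lom}^{-1}\le\norm{\psi}_\H^2$. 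Hence $\frac{d}{dt}\E(\psi(t))\le0$, with equality exactly when $\G(\psi)$ and $\psi$ are $\H$-parallel, that is, when $\psi$ is an eigenfunction. Since $\E(\psi)=\tfrac12\norm{\psi}_\H^2$, this monotonicity yields the a priori bound $\norm{\psi(t)}_\H^2\le2\E(\psi(0))$; together with $\norm{\psi(t)}_{\Lom}=1$ and the resulting uniform lower bound $(\G(\psi),\psi)_{\Lom}=\norm{\G(\psi)}_\H^2\ge\norm{\psi}_\H^{-2}\ge(2\E(\psi(0)))^{-1}$ on the denominator, the local solution can neither leave the admissible region nor blow up, so a standard continuation argument extends it to all $t\ge0$.

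The hard part will be the convergence of $\psi(t)$ to a single eigenfunction. Here I would exploit that the solution operator $\G:\H\to\H$ from~\eqref{eq:Gdef} is compact (via the Rellich embedding $\HS\hookrightarrow\Lom$ on a bounded Lipschitz domain), self-adjoint, and positive, so by the spectral theorem there is an $\Lom$-orthonormal, $\H$-orthogonal eigenbasis $\{\phi_k\}$ with $\G\phi_k=E_k^{-1}\phi_k$ and $E_k\to\infty$, containing $\psi_1,\dotsc,\psi_m$ among its elements. Expanding $\psi(t)=\sum_k c_k(t)\phi_k$, the flow decouples into the scalar system $\dot c_k=c_k\big((E_kS(t))^{-1}-1\big)$ with $S(t)=(\G(\psi(t)),\psi(t))_{\Lom}=\sum_j E_j^{-1}c_j^2$, under the preserved constraint $\sum_k c_k^2=1$ and with $\psi(t)\in\Span\{\psi_1,\dotsc,\psi_m\}^\perp$ throughout. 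The monotone, bounded energy $2\E(\psi(t))=\sum_k E_k c_k^2(t)$ forces $S(t)$ to converge, and analysing the modal dynamics shows that the mass concentrates on the eigenspace associated with the limiting value $E^\star=\lim_{t\to\infty}S(t)^{-1}$, yielding strong $\H$-convergence to a normalised eigenfunction $\psi^\star$ with $E^\star=(\psi^\star,\psi^\star)_\H$. The main obstacle is to upgrade precompactness of the trajectory to convergence of the full path, ruling out drift within or between eigenspaces; the discreteness of the spectrum controls the infinite-dimensional tail, and one may alternatively close this step by a {\L}ojasiewicz--Simon gradient inequality, which applies since $\E$ is a quadratic, hence real-analytic, functional.
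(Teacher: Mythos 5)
Your proposal is correct in substance, but it follows a genuinely different route from the paper, which in fact offers essentially no proof of Theorem~\ref{thm:contflow}: the paper observes that~\eqref{eq:simplifiedflow} coincides with the Sobolev gradient flow of~\cite{HenningPeterseim:18}, defers entirely to Theorem~3.2 of that reference for well-posedness, dissipation and convergence, and contributes only the one ingredient that is new here, namely that by Proposition~\ref{prop:Projection} the constraints propagate, $g(\psi(t))=0$ for all $t\ge 0$, whence $g(\psi^\star)=\lim_{t\to\infty}g(\psi(t))=0$ and the limit is $\Lom$-normalized and orthogonal to $\psi_1,\dotsc,\psi_m$. Your argument is instead self-contained and leans on the linearity of the problem: the Picard--Lindel\"of/continuation reasoning and the dissipation computation (via $\E(\psi)=\tfrac12\norm{\psi}_\H^2$, the preserved constraints, and Cauchy--Schwarz) are correct as written, and the spectral decomposition of the compact, self-adjoint, positive operator $\G$ is exactly the right tool for the convergence claim in the linear setting. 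Indeed that last step closes more easily than you anticipate: coefficients belonging to a single eigenspace satisfy identical scalar ODEs, so their ratios are frozen in time and no ``drift within an eigenspace'' can occur; moreover $S(t)=(\G(\psi(t)),\psi(t))_{\Lom}\le E_{\mathrm{GS}}^{-1}$ gives $\int_0^\infty S(r)^{-1}\,\mathrm{d}r=\infty$, so $c_k(t)/c_{k_0}(t)\to 0$ exponentially for every admissible mode with $E_k>E_{k_0}$, where $E_{k_0}$ is the smallest eigenvalue carrying nonzero initial mass in $\Span\{\psi_1,\dotsc,\psi_m\}^\perp$; combined with the uniform bound on $\sum_k E_k c_k^2(t)$ to control the tail, this yields convergence of the whole trajectory in $\H$ to the normalized projection of $\psi(0)$ onto that eigenspace, so the Lojasiewicz--Simon fallback is unnecessary. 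The trade-off between the two approaches: the paper's citation is short and rests on a result whose proof also covers the nonlinear Gross--Pitaevskii equation, while your proof buys an explicit identification of the limit and exponential convergence rates, but breaks down the moment a nonlinearity destroys the modal decoupling. One point of hygiene: rather than invoking Proposition~\ref{prop:Projection} (stated for the projected flow~\eqref{eq:projectedgradientflow}) to propagate the constraints along~\eqref{eq:simplifiedflow}, it is cleaner to verify invariance directly on~\eqref{eq:simplifiedflow} --- the quantities $\norm{\psi}_{\Lom}^2-1$ and $(\psi,\psi_i)_{\Lom}$ obey linear scalar ODEs for which zero is a solution --- thereby avoiding any circularity with the well-definedness assumption on the projected flow.
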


We will not prove this theorem here, but only remark that $g(\psi(t))=0$, $t \geq 0$, by Propostion~\ref{prop:Projection}, and thus $g(\psi^\star)=\lim_{t \to \infty} g(\psi(t))=0$. Consequently, $\psi^\star$ is indeed $\Lom$-normalized and orthogonal to $\psi_1,\dots,\psi_m$ with respect to the $\Lom$-inner product.

\subsection{Gradient flow iteration for Schr\"{o}dingers equation}

In order to turn the continuous gradient flow from~\eqref{eq:simplifiedflow} into a (computable) iterative scheme, we will use a forward Euler time discretisation. In particular, we define the gradient flow iteration (GFI) as in~\cite{HenningPeterseim:18,HeidStammWihler:19}. Let $\gamma^n:=\frac{1}{(\G(\psi^n),\psi^n)_{\Lom}}$ for $n \geq 0$, and let $\{\tau^n\}_{n \geq 0}$ be a sequence of positive time steps bounded from below and above, i.e.,
\begin{align*}
0<\tau_{\min} \leq \tau^n \leq \tau_{\max}< \infty \qquad \forall n \geq 0.
\end{align*} 
Then, for $\psi^0 \in \HS$ with $g(\psi^0)=0$, we define iteratively 
 \begin{align} \label{eq:GFIit}
  \widehat{\psi}^{n+1}:=(1-\tau^n)\psi^n+\tau^n \gamma^n \mathsf{G}(\psi^n) \qquad \text{and} \qquad \psi^{n+1}=\frac{\widehat{\psi}^{n+1}}{\norm{\widehat{\psi}^{n+1}}_{\Lom}}.
 \end{align}

\begin{lemma} \label{lem:constraint}
If $g(\psi^0)=0$, then $g(\psi^n)=0$ for all $n \geq 0$. 
\end{lemma}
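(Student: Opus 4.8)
The plan is to argue by induction on $n$. The base case $n=0$ is exactly the hypothesis $g(\psi^0)=0$. For the inductive step, suppose $g(\psi^n)=0$; recalling the definition~\eqref{eq:constraint} of $g$, this means precisely that $\norm{\psi^n}_{\Lom}=1$ and that $\psi^n$ is $\Lom$-orthogonal to each of $\psi_1,\dotsc,\psi_m$. I must then establish the same two properties for $\psi^{n+1}$, i.e.~show that each component of $g(\psi^{n+1})$ vanishes.

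The normalization component is immediate: since $\psi^{n+1}$ is, by~\eqref{eq:GFIit}, a positive multiple of $\widehat{\psi}^{n+1}$, rescaling by $\norm{\widehat{\psi}^{n+1}}_{\Lom}$ forces $\norm{\psi^{n+1}}_{\Lom}=1$. It remains to verify the $m$ orthogonality conditions, and since $\psi^{n+1}$ and $\widehat{\psi}^{n+1}$ are parallel it suffices to show $(\psi_i,\widehat{\psi}^{n+1})_{\Lom}=0$ for each $i$. Expanding with~\eqref{eq:GFIit} gives
\[
(\psi_i,\widehat{\psi}^{n+1})_{\Lom}=(1-\tau^n)(\psi_i,\psi^n)_{\Lom}+\tau^n\gamma^n(\psi_i,\G(\psi^n))_{\Lom}.
\]
The first term vanishes by the inductive hypothesis. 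For the second I would invoke the computation already recorded in~\eqref{eq:selfadjoint}: under the hypothesis $\psi^n\perp\psi_i$ in $\Lom$, the chain of identities built from~\eqref{eq:Gdef} and~\eqref{eq:Gu} yields $(\G(\psi^n),\psi_i)_{\Lom}=\frac{1}{E_i}(\psi^n,\psi_i)_{\Lom}=0$. Hence both terms vanish, so $(\psi_i,\widehat{\psi}^{n+1})_{\Lom}=0$ and therefore $(\psi_i,\psi^{n+1})_{\Lom}=0$, which closes the induction.

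The only point that genuinely requires care is the well-definedness of the rescaling, i.e.~that $\norm{\widehat{\psi}^{n+1}}_{\Lom}\neq 0$ so that $\psi^{n+1}$ makes sense. I would dispatch this by pairing $\widehat{\psi}^{n+1}$ with $\psi^n$ in $\Lom$: using $\norm{\psi^n}_{\Lom}=1$ together with the definition $\gamma^n=1/(\G(\psi^n),\psi^n)_{\Lom}$, one finds
\[
(\widehat{\psi}^{n+1},\psi^n)_{\Lom}=(1-\tau^n)(\psi^n,\psi^n)_{\Lom}+\tau^n\gamma^n(\G(\psi^n),\psi^n)_{\Lom}=(1-\tau^n)+\tau^n=1,
\]
so that $\widehat{\psi}^{n+1}\neq 0$. (That $\gamma^n$ is itself finite follows since $(\G(\psi^n),\psi^n)_{\Lom}=\norm{\G(\psi^n)}_{\H}^2>0$ for $\psi^n\neq 0$, by~\eqref{eq:Gdef}.) Thus I expect no serious obstacle: the argument is a short induction whose entire substance is the self-adjointness identity~\eqref{eq:selfadjoint}, which transfers the $\Lom$-orthogonality of $\psi^n$ to that of $\G(\psi^n)$ against the known eigenfunctions $\psi_1,\dotsc,\psi_m$.
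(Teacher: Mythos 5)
Your proof is correct and follows essentially the same route as the paper's: induction on $n$, with the normalization handled by the rescaling step and the orthogonality constraints transferred from $\psi^n$ to $\widehat{\psi}^{n+1}$ via the self-adjointness identity~\eqref{eq:selfadjoint} together with $\G(\psi_i)=\nicefrac{\psi_i}{E_i}$ from~\eqref{eq:Gu}. Your additional verification that $\norm{\widehat{\psi}^{n+1}}_{\Lom}\neq 0$ (via $(\widehat{\psi}^{n+1},\psi^n)_{\Lom}=1$) is a sound extra step that the paper leaves implicit.
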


\begin{proof}
The first constraint, $\norm{\psi^n}_{\Lom}=1$, holds obviously for all iterates $\psi^{n}$ due to the normalization step in~\eqref{eq:GFIit}. We will verify the remaining constraints, $(\psi^{n},\psi_{i})_{\Lom}=0$, for $i \in \{1,\dotsc,m\}$, by induction. By assumption it holds that $(\psi^0,\psi_i)_{\Lom}=0$ for all $i \in \{1,\dotsc,m\}$, and assume this remains true for $\psi^n$. Let $i \in \{1,\dotsc,m\}$ be arbitrary, and note that $(\psi^{n+1},\psi_i)_{\Lom}=0$ if and only if $(\widehat{\psi}^{n+1},\psi_i)_{\Lom}=0$ by the linearity of the inner product. By definition of $\widehat{\psi}^{n+1}$, cf.~\eqref{eq:GFIit}, and the linearity of the inner product, it holds that
 \begin{align*}
  (\widehat{\psi}^{n+1},\psi_i)_{\Lom}&=(1-\tau^n)(\psi^n,\psi_i)_{\Lom}+\tau^n \gamma^n (\mathsf{G}(\psi^n),\psi_i)_{\Lom}.
 \end{align*} 
 Recall that $\G$ is self-adjoint with respect to the $\Lom$-inner product and $\G(\psi_i)=\nicefrac{\psi_i}{E_i}$, cf.~\eqref{eq:selfadjoint} and~\eqref{eq:Gu}, respectively. This, together with the inductive assumption, implies
 \begin{align*}
   (\widehat{\psi}^{n+1},\psi_i)_{\Lom} = \left(1-\tau^n \left(1-\frac{\gamma^n}{E_i}\right)\right) (\psi^n,\psi_i)_{\Lom}=0,
 \end{align*}
 which proves the claim.
\end{proof}

We emphasize once more that our GFI is (incidentally) the same as the one from~\cite{HenningPeterseim:18}, and thus shares its properties. The following result basically summarizes Lemma~4.7 and Theorem~4.9 in~\cite{HenningPeterseim:18}.

\begin{theorem} \label{thm:timediscreteflow}
Consider the GFI from~\eqref{eq:GFIit} and let $\psi^0 \in \HS$ such that $g(\psi^0)=0$. Then, there exists $\tau_{\max}>0$ such that, for all $\tau^n \leq \tau_{\max}$, 
\[\E(\psi^{n+1}) \leq \E({\psi^n}),\]
i.e.~the energy is dissipative. Moreover, if $0<\tau_{\min} \leq \tau^n \leq \tau_{\max}$ for some $\tau_{\min}>0$ and all $n \geq 0$, then the limit $\E^\star:=\lim_{n \to \infty} \E(\psi^n)$ is well-defined. Furthermore, there exists a subsequence $\{\psi^{n_j}\}_{j \geq 0}$ such that $\psi^{n_j} \to \psi^\star$ strongly in $\H$, where $\psi^\star$ is an eigenfunction of Schr\"{o}dingers equation~\eqref{eq:SEweak} to the eigenvalue 
\[E^\star:=\norm{\G(\psi^\star)}_{\H}^{-1}=\norm{\psi^\star}_{\H}^2=2 \E^\star,\]
i.e. it holds that
 \[\dprod{\E'(\psi^\star),\varphi}=E^\star (\psi^{\star},\varphi)_{\Lom} \qquad \forall \varphi \in \HS.\] 
 The eigenfunction $\psi^\star$ satisfies the equality constraints~\eqref{eq:constraint}, i.e.~
 \[\norm{\psi^\star}_{\Lom}=1 \quad \text{and} \quad (\psi^\star,\psi_i)_{\Lom}=0 \quad \forall i \in \{1,\dots,m\}.\]
The limit of any other convergent subsequence of $\{\psi^n\}$ in $\H$ is likewise an eigenfunction to the eigenvalue $E^\star$, which satisfies all the constraints~\eqref{eq:constraint}.
\end{theorem}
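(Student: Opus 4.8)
The plan is to reduce every assertion to the explicit quadratic structure $\E(\psi)=\tfrac12\norm{\psi}_\H^2$, which follows from~\eqref{eq:eprime} since $\E$ is quadratic, together with the two defining identities $(\G(\psi),\varphi)_\H=(\psi,\varphi)_{\Lom}$ from~\eqref{eq:Gdef} and $\dprod{\E'(\psi),\varphi}=(\psi,\varphi)_\H$ from~\eqref{eq:eprime}. Throughout I write $u:=\psi^n$, so that $\norm{u}_{\Lom}=1$ by the normalisation in~\eqref{eq:GFIit}, and I abbreviate $E_R:=\norm{u}_\H^2=2\E(\psi^n)$ and $\gamma:=\gamma^n=(\G(u),u)_{\Lom}^{-1}$. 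Two consequences of~\eqref{eq:Gdef} that I will use repeatedly are $(u,\G(u))_\H=(u,u)_{\Lom}=1$ and $\norm{\G(u)}_\H^2=(u,\G(u))_{\Lom}=\gamma^{-1}$.

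First I would establish energy dissipativity by an explicit computation. Expanding $\widehat\psi^{n+1}=(1-\tau^n)u+\tau^n\gamma\,\G(u)$ and inserting the identities above, one obtains closed forms for $\norm{\widehat\psi^{n+1}}_\H^2$ and $\norm{\widehat\psi^{n+1}}_{\Lom}^2$, hence for $2\E(\psi^{n+1})=\norm{\widehat\psi^{n+1}}_\H^2/\norm{\widehat\psi^{n+1}}_{\Lom}^2$. Clearing the (positive) denominator, the sign of $\E(\psi^{n+1})-\E(\psi^n)$ equals that of $-\tau^n(E_R-\gamma)+\tfrac12(\tau^n)^2 C_n$, where $C_n$ is built from $E_R$, $\gamma$ and $\norm{\G(u)}_{\Lom}^2$. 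The decisive observation is that the first-order term is a genuine descent term: Cauchy--Schwarz in the $\H$-inner product gives $1=(u,\G(u))_\H\le\norm{u}_\H\norm{\G(u)}_\H=\sqrt{E_R/\gamma}$, whence $\gamma\le E_R$, with equality exactly when $u$ is an eigenfunction (where the iteration is stationary, so dissipativity is trivial).

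Next I would choose $\tau_{\max}$. All geometric quantities are bounded uniformly in $n$: monotonicity of the energy bounds $E_R=2\E(\psi^n)\le 2\E(\psi^0)$ from above and by $0$ from below (as $V\ge0$), while the coercivity and the compact embedding $\HS\hookrightarrow\Lom$ bound $\G$ as an operator, giving $\gamma$ bounded below and $\norm{\G(u)}_{\Lom}$ bounded above. Consequently $C_n$ is uniformly bounded, and a single $\tau_{\max}>0$ forces $-\tau^n(E_R-\gamma)+\tfrac12(\tau^n)^2 C_n\le 0$ for every $n$, yielding $\E(\psi^{n+1})\le\E(\psi^n)$. I expect this uniform control to be the main obstacle: near convergence both $E_R-\gamma$ and $C_n$ degenerate to $0$ simultaneously, so rather than bounding them separately one must quantify $C_n\lesssim E_R-\gamma$ uniformly to keep $\tau_{\max}$ independent of $n$; this is the delicate estimate underlying Lemma~4.7 in~\cite{HenningPeterseim:18}.

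Finally, for convergence I would argue as follows. Since $\E(\psi^n)$ is nonincreasing and bounded below, $\E^\star:=\lim_n\E(\psi^n)$ exists; summing the per-step decrease and using $\tau^n\ge\tau_{\min}$ yields $\sum_n(E_R^n-\gamma^n)<\infty$, so the Cauchy--Schwarz defect $E_R^n-\gamma^n\to 0$, i.e.\ $\psi^n$ becomes asymptotically an eigenfunction. The bound on $\norm{\psi^n}_\H$ gives, along a subsequence, weak convergence in $\H$ and, by compactness of $\HS\hookrightarrow\Lom$, strong convergence in $\Lom$; the relation $\G(\psi^n)-(\gamma^n)^{-1}\psi^n\to 0$ (the vanishing defect, reformulated) together with continuity of $\G$ then upgrades this to strong convergence $\psi^{n_j}\to\psi^\star$ in $\H$ and shows $\G(\psi^\star)=(E^\star)^{-1}\psi^\star$, i.e.\ $\psi^\star$ solves~\eqref{eq:SEweak} with $E^\star=\norm{\psi^\star}_\H^2=2\E^\star$. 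The constraints $g(\psi^\star)=0$ pass to the limit from Lemma~\ref{lem:constraint} by $\Lom$-continuity of $g$. For any other convergent subsequence, its limit inherits energy $\E^\star$ and vanishing defect, hence is again an eigenfunction to $E^\star$ satisfying all the constraints.
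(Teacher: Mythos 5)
Your proposal is correct in substance but takes a genuinely different route from the paper. The paper gives no self-contained proof of Theorem~\ref{thm:timediscreteflow} at all: having observed that the projected flow coincides (incidentally) with the Sobolev gradient flow of \cite{HenningPeterseim:18}, it imports Lemma~4.7 and Theorem~4.9 of that reference wholesale, and it covers the constraint assertions via Lemma~\ref{lem:constraint} together with passing to the limit. You instead exploit the quadratic structure available in the \emph{linear} case, $\E(\psi)=\tfrac12\norm{\psi}_\H^2$, and reduce everything to explicit algebra with $\G$; your identities are all correct ($(u,\G(u))_\H=1$, $\norm{\G(u)}_\H^2=\gamma^{-1}$, the increment sign being that of $-2\tau(E_R-\gamma)+\tau^2C_n$, and $\gamma\le E_R$ with equality exactly at eigenfunctions), and the convergence part (telescoping the energy decrease, the defect identity $\norm{u-\gamma\G(u)}_\H^2=E_R-\gamma$, Rellich compactness, continuity of $\G$ from $\Lom$ to $\H$, and Lemma~\ref{lem:constraint} plus $\Lom$-continuity of $g$ for the constraints) is sound. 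What each approach buys: the paper's citation is short but leans on a result proved for the harder nonlinear Gross--Pitaevskii energy; yours is self-contained, makes visible where the time-step restriction comes from, and directly yields every clause of the theorem, including the constraint statements, without leaving the linear setting.

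The one step you defer back to \cite{HenningPeterseim:18} as ``the delicate estimate'' is, in this linear setting, not delicate at all, and you should close it yourself: since $\norm{u}_{\Lom}=1$, Cauchy--Schwarz in $\Lom$ gives $\gamma^{-1}=(u,\G(u))_{\Lom}\le\norm{\G(u)}_{\Lom}$, i.e.\ $\gamma^2\norm{\G(u)}_{\Lom}^2\ge 1$, whence
\begin{equation*}
C_n \;=\; 2E_R-\gamma-E_R\,\gamma^2\norm{\G(u)}_{\Lom}^2 \;\le\; 2E_R-\gamma-E_R \;=\; E_R-\gamma ,
\end{equation*}
so that $-2\tau(E_R-\gamma)+\tau^2C_n\le -(E_R-\gamma)\,\tau(2-\tau)\le 0$ for every $\tau\in(0,2]$, uniformly in $n$; in particular any $\tau_{\max}<2$ works, and the same inequality $\gamma^2\norm{\G(u)}_{\Lom}^2\ge1$ shows $\norm{\widehat\psi^{n+1}}_{\Lom}\ge 1$, so the normalisation step in \eqref{eq:GFIit} is always well defined, and (together with the upper bound on $\norm{\widehat\psi^{n+1}}_{\Lom}$ from the embedding $\H\hookrightarrow\Lom$) it supplies the quantitative per-step decrease your summation argument requires. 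The simultaneous degeneration of $C_n$ and $E_R-\gamma$ that you anticipate as the main obstacle is a genuine difficulty only for the quartic Gross--Pitaevskii term treated in \cite{HenningPeterseim:18}; for Schr\"odingers equation it cancels by the single Cauchy--Schwarz step above. With that line added, your argument is complete and entirely self-contained --- which is more than the paper itself provides for this theorem.
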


\section{Adaptive gradient flow finite element discretisation}\label{sec:FEM}

We now focus on the adaptive spatial discretisation of the gradient flow iteration~\eqref{eq:GFIit} and the interplay of the two schemes. For that purpose we will employ the strategy from~\cite{HeidStammWihler:19}. In particular, this section is for a large part a replication of~\cite[\S 3]{HeidStammWihler:19} with some minor adaptions.

\subsection{Finite element discretisation}

Consider a sequence of conforming and shape regular partitions $\{\T_N\}_{N\in\mathbb{N}}$ of the domain~$\Omega$ into simplicial elements~$\T_N=\{\kappa\}_{\kappa\in\T_N}$ (i.e.~triangles for $d=2$ and tetrahedra for~$d=3$). Moreover, for a (fixed) polynomial degree~$p\in\mathbb{N}$ and any subset~$\omega \subset\T_N$, we introduce the finite element space 
\[
\V(\omega)=\left\{\varphi \in\HS:\, \varphi|_\kappa\in\poly_p(\kappa), \kappa\in\omega,\,\varphi|_{\Omega\setminus\omega}=0\right\},
\]
with~$\poly_p(\kappa)$ signifying the (local) space of all polynomials of maximal total degree~$p$ on~$\kappa$, $\kappa\in\T_N$. In the sequel, we apply the notation $\X_N:=\V(\mathcal{T}_N)$.

\subsection{Discrete GFI}

Let us define the (space) discrete version of the gradient flow iteration~\eqref{eq:GFIit} on a  finite element subspace $\X_N \subset \HS$. For $\psi \in \HS$ we denote by $\G_N(\psi) \in \X_N$ the unique solution of 
\begin{align*}
 	(\G_N(\psi),\varphi)_{\H}=(\psi,\varphi)_{{\rm L}^2(\Omega)} \qquad \forall \varphi \in \X_N;
\end{align*}
we emphasize that $\G_N:\HS \to \X_N$ is a well-defined linear operator by the Riesz representation theorem, cp.~\eqref{eq:Gdef}. Then, for $\psi_N^0 \in \X_N$ with $g(\psi_N^0)=0$, the space discrete GFI in $\X_N$ is given by
\begin{subequations}\label{eq:discreteGF}
\begin{align} 
 	\psi_N^{n+1}
 	&=\frac{\widehat{\psi}_N^{n+1}}{\norm{\widehat{\psi}_N^{n+1}}_{\Lom}}, \label{eq:discreteGF1}
 	\intertext{where}
 	\widehat{\psi}_N^{n+1}&=(1-\tau_{N}^n) \psi_N^n+\tau_N^n \gamma_N^n\mathsf{G}_N(\psi_N^n),\label{eq:discreteGF2}
\end{align}
\end{subequations}
with $\gamma_N^n:=\frac{1}{(\mathsf{G}_N(\psi_N^n),\psi_N^n)_{\Lom}}$, and a sequence of discrete time steps~$\{\tau_N^n\}_{n \geq 0}$ satisfying $0<\tau_{\min} \leq \tau_N^n \leq \tau_{\max}$ for $n \geq 0$. 

\begin{lemma} \label{lem:discreteconstraint}
Consider a fixed mesh $\T_N$ with corresponding finite element space~$\X_N$. Let $\{\psi_N^n\}_{n \geq 0}\subset\X_N$ be the sequence generated by the discrete GFI~\eqref{eq:discreteGF} with some initial guess $\psi_N^0 \in \HS$ with $g(\psi_N^0)=0$. Then, $g(\psi_N^n)=0$ for all $n \geq 0$.
\end{lemma}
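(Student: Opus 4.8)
The plan is to argue by induction on $n$, paralleling the proof of Lemma~\ref{lem:constraint} but with the discrete operator $\G_N$ in place of $\G$. Recall that $g(\psi_N^n)=0$ decomposes into the normalisation constraint $\norm{\psi_N^n}_{\Lom}=1$ and the $m$ orthogonality constraints $(\psi_N^n,\psi_i)_{\Lom}=0$, $i\in\{1,\dots,m\}$. The normalisation constraint is immediate: for $n\ge 1$ each iterate is produced by the explicit normalisation step~\eqref{eq:discreteGF1}, so $\norm{\psi_N^{n}}_{\Lom}=1$ (provided $\widehat{\psi}_N^{n}\neq 0$), while for $n=0$ it holds by hypothesis on $\psi_N^0$. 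Hence the real work is to propagate the orthogonality constraints.

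Since $\psi_N^{n+1}$ is a positive scalar multiple of $\widehat{\psi}_N^{n+1}$, the equality $(\psi_N^{n+1},\psi_i)_{\Lom}=0$ is equivalent to $(\widehat{\psi}_N^{n+1},\psi_i)_{\Lom}=0$. Using the defining relation~\eqref{eq:discreteGF2} and bilinearity, I would expand
\begin{align*}
(\widehat{\psi}_N^{n+1},\psi_i)_{\Lom}=(1-\tau_N^n)(\psi_N^n,\psi_i)_{\Lom}+\tau_N^n\gamma_N^n(\G_N(\psi_N^n),\psi_i)_{\Lom},
\end{align*}
so that, under the inductive hypothesis $(\psi_N^n,\psi_i)_{\Lom}=0$, the whole identity reduces to showing $(\G_N(\psi_N^n),\psi_i)_{\Lom}=0$. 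To handle this term I would establish the discrete analogues of the two ingredients used in~\eqref{eq:selfadjoint} and~\eqref{eq:Gu}: the $\Lom$-self-adjointness of $\G_N$ on $\X_N$, and the eigen-scaling relation $\G_N(\psi_i)=\psi_i/E_i$. The self-adjointness is cheap and follows by testing the definition of $\G_N$ twice against elements of $\X_N$: for $\phi,\varphi\in\X_N$ one obtains $(\G_N(\phi),\varphi)_{\Lom}=(\G_N(\varphi),\G_N(\phi))_\H=(\phi,\G_N(\varphi))_{\Lom}$. Combining both identities gives $(\G_N(\psi_N^n),\psi_i)_{\Lom}=E_i^{-1}(\psi_N^n,\psi_i)_{\Lom}=0$, which closes the induction exactly as in Lemma~\ref{lem:constraint}.

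The delicate point — and the one genuine departure from the continuous argument — is the discrete eigen-scaling $\G_N(\psi_i)=\psi_i/E_i$. Unlike $\G$, the operator $\G_N$ is characterised by its defining relation \emph{only} when tested against functions in $\X_N$, so this identity is not automatic. It holds precisely because $\psi_i\in\X_N$: then the exact eigenfunction relation $(\psi_i,\varphi)_\H=E_i(\psi_i,\varphi)_{\Lom}$, valid for all $\varphi\in\HS\supset\X_N$ by~\eqref{eq:eprime}, together with $\psi_i/E_i\in\X_N$, pins down $\G_N(\psi_i)$ through uniqueness in the Riesz definition. I therefore expect the crux of the write-up to be making explicit that the constraint functions $\psi_i$ belong to the finite element space $\X_N$ (so that the self-adjointness step applies with $\phi=\psi_N^n$, $\varphi=\psi_i$ and the eigen-scaling is legitimate). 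Once this membership is granted, the remaining manipulations are the verbatim discrete transcription of~\eqref{eq:selfadjoint},~\eqref{eq:Gu}, and the closing computation of Lemma~\ref{lem:constraint}.
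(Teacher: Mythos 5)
Your induction skeleton is the right one, but the pivotal step rests on an assumption the lemma does not grant: that the known eigenfunctions $\psi_1,\dotsc,\psi_m$ belong to $\X_N$. In the paper's setting the $\psi_i$ are exact eigenfunctions of the continuous problem~\eqref{eq:SEweak}, defined independently of any mesh, and generically they lie in no finite element space; nothing in the hypotheses places them in $\X_N$. Without that membership your eigen-scaling identity fails: combining the definition of $\G_N$ with the eigenfunction relation only gives
\begin{equation*}
(\G_N(\psi_i),\varphi)_\H=(\psi_i,\varphi)_{\Lom}=E_i^{-1}(\psi_i,\varphi)_\H \qquad \forall \varphi \in \X_N,
\end{equation*}
i.e.\ $\G_N(\psi_i)=E_i^{-1}R_N\psi_i$, where $R_N$ denotes the $\H$-orthogonal (Ritz) projection onto $\X_N$ --- not $E_i^{-1}\psi_i$. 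Your induction then leaves you with the term $\tau_N^n\gamma_N^n E_i^{-1}(\psi_N^n,R_N\psi_i)_{\Lom}$, and this does not vanish merely because $(\psi_N^n,\psi_i)_{\Lom}=0$: the Ritz error $\psi_i-R_N\psi_i$ is $\H$-orthogonal, not $\Lom$-orthogonal, to $\X_N$, so $(\psi_N^n,R_N\psi_i)_{\Lom}$ and $(\psi_N^n,\psi_i)_{\Lom}$ differ in general. So, read as a proof of the lemma as stated, your argument has a genuine gap at exactly the point you flagged. The paper proceeds differently precisely to avoid assuming $\psi_i\in\X_N$: it introduces the $\Lom$-orthogonal projections $\psi_{N,i}:=P_N(\psi_i)\in\X_N$, observes that for iterates lying in $\X_N$ orthogonality to $\psi_i$ and to $\psi_{N,i}$ are equivalent, and then asserts that the induction of Lemma~\ref{lem:constraint} can be run with the $\psi_{N,i}$ in place of the $\psi_i$.

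That said, your diagnosis of where the difficulty sits is sound, and the comparison cuts both ways: the paper's projection device does not actually remove the obstruction you identified. Running the argument of Lemma~\ref{lem:constraint} for the constraints $(\psi_{N,i},\cdot)_{\Lom}$ requires $\G_N(\psi_{N,i})$ to be a scalar multiple of $\psi_{N,i}$; but the defining relations of $\G_N$ and $P_N$ test only against $\X_N$, whence $\G_N(\psi_{N,i})=\G_N(\psi_i)=E_i^{-1}R_N\psi_i$, and the Ritz projection $R_N\psi_i$ is in general not parallel to the $\Lom$-projection $P_N\psi_i$. Hence the paper's phrase ``by following the lines in Lemma~\ref{lem:constraint}'' conceals the same issue, and exact preservation of the orthogonality constraints genuinely does require the $\psi_i$ to lie in $\X_N$ --- for instance, to be discrete eigenfunctions of~\eqref{eq:discreteSE} on the same mesh, which is close to what happens in the paper's experiments, where the $\psi_i$ are previously computed finite element approximations. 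This is also consistent with the paper's remark that orthogonality ``may get lost'' in practice and must be restored by the explicit projection~\eqref{eq:GFIcomp}. In short: your proof establishes the lemma only under the extra hypothesis $\psi_i\in\X_N$, which the statement does not provide; the paper's proof avoids stating that hypothesis, but on inspection needs it (or a discrete surrogate for the $\psi_i$) at the very same point.
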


\begin{proof}
The norm constraint trivially holds by~\eqref{eq:discreteGF1}, and thus it remains to prove the orthogonality constraints. Let $P_N:\HS \to \X_N$ denote the orthogonal projection with respect to the $\Lom$-inner product, and define $\psi_{N,i}:=P_N(\psi_i)$, i.e., it holds that
\begin{align} \label{eq:eigenl2}
(\psi_i,\varphi)_{\Lom}=(\psi_{N,i},\varphi)_{\Lom} \qquad \forall \varphi \in \X_N.
\end{align} 
Then, by following the lines in Lemma~\ref{lem:constraint}, we obtain that $g_N(\psi_N^n)=0$ for all $n \geq 0$, where
\begin{align*} 
g_N(\psi):=\left(\norm{\psi}_{\Lom}-1,(\psi_{N,i},\psi)_{\Lom},\dotsc,(\psi_{N,m},\psi)_{\Lom}\right)^t.
\end{align*}
Invoking once more the orthogonality property~\eqref{eq:eigenl2}, we obtain
\begin{align*}
(\psi_N^n,\psi_{N,i})=(\psi_N^n,\psi_i) \qquad \forall i \in \{1,\dotsc,m\}, \, n \geq 0,
\end{align*}
which completes the proof.
\end{proof}



In the discrete space $\X_N$, it holds a similar result as in Theorem~\ref{thm:timediscreteflow}. In particular, the next result is borrowed, and modified in account of the additional constraints, from~\cite[Cor.~4.11]{HenningPeterseim:18}. 

\begin{proposition} \label{prop:discreteconvergence}
Let $\{\tau_N^n\}_{n \geq 0}$ be a time sequence as in Theorem~\ref{thm:timediscreteflow}, and $\{\psi_N^n\}_{n \geq 0}$ be generated by the iteration~\eqref{eq:discreteGF}. Then, the corresponding energies $2 \E(\psi_N^n)$ are strictly monotone decreasing, and the limit $\E_N^\star=\lim_{n \to \infty} \E(\psi_N^n)$ exists. Furthermore, up to subsequences, we have that $\psi_N^{n_j} \to \psi_N^\star$ strongly in $\H$, where $\psi_N^\star \in \X_N$ satisfies $g(\psi_N^\star)=0$ and $\E(\psi_N^\star)=\E_N^\star$. Moreover, $\psi_N^\star$ is a discrete eigenfunction of the corresponding Schr\"{o}dinger equation, i.e.,
 \begin{align} \label{eq:discreteSE}
 \dprod{\E'(\psi_N^\star),\varphi}=E_N^\star (\psi_N^{\star},\varphi)_{\Lom} \qquad \forall \varphi \in \X_N,
 \end{align}
 with $E_N^\star=2 \E_N^\star$. Finally, any other limit point $\Psi_N^\star$ of $\{\psi_N^n\}_{n \geq 0}$ is an eigenstate with $g(\Psi_N^\star)=0$ and energy level $E_N^\star$.
\end{proposition}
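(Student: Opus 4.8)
The plan is to transfer the convergence result of Theorem~\ref{thm:timediscreteflow} from the continuous Hilbert space $\H=\HS$ to the finite-dimensional subspace $\X_N$, exploiting that the discrete GFI~\eqref{eq:discreteGF} is structurally identical to~\eqref{eq:GFIit} with the sole replacement of $\G$ by its Galerkin approximation $\G_N$. Since $\cite[\text{Cor.~4.11}]{HenningPeterseim:18}$ already establishes the analogous statement for the unconstrained (ground-state) discrete flow, the bulk of the work is to check that the additional orthogonality constraints $g(\psi_N^n)=0$ are preserved and pass to the limit, and that the self-adjointness and coercivity arguments used in the continuous case survive verbatim in $\X_N$.

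First I would invoke Lemma~\ref{lem:discreteconstraint} to guarantee that $g(\psi_N^n)=0$ for every $n\ge 0$, so that each iterate lies on the discrete constraint manifold $S\cap\X_N$ and, in particular, is $\Lom$-normalised. Next I would establish energy dissipation: because $\G_N$ is self-adjoint and positive with respect to the $\Lom$-inner product on the finite-dimensional space $\X_N$ (a direct consequence of its defining relation, cf.~\eqref{eq:Gdef} restricted to $\X_N$), the same elementary estimate as in~\cite{HenningPeterseim:18} shows that for all sufficiently small time steps, $\tau_N^n\le\tau_{\max}$, one has $\E(\psi_N^{n+1})\le\E(\psi_N^n)$, with strict inequality unless $\psi_N^n$ is already a discrete critical point. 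Since $\E\ge 0$ on $S_\H$, the monotone decreasing sequence $\{\E(\psi_N^n)\}$ is bounded below and hence converges, defining $\E_N^\star$.

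With the energy bounded and $\Lom$-norm fixed to one, the sequence $\{\psi_N^n\}$ is bounded in the $\H$-norm (using~\eqref{eq:innerproductt} and the nonnegativity of $V$); as $\X_N$ is finite-dimensional, I would extract, via Bolzano--Weierstrass, a subsequence $\psi_N^{n_j}\to\psi_N^\star$ strongly in $\H$. Passing to the limit in the closed constraint $g(\psi_N^{n_j})=0$ yields $g(\psi_N^\star)=0$, and continuity of $\E$ gives $\E(\psi_N^\star)=\E_N^\star$. To identify $\psi_N^\star$ as a discrete eigenfunction satisfying~\eqref{eq:discreteSE}, I would argue that the increments $\norm{\psi_N^{n+1}-\psi_N^n}_{\H}\to 0$ (a standard consequence of summable energy decrease together with the lower time-step bound $\tau_{\min}$), so that in the limit the fixed-point relation defining the GFI forces $\widehat\psi_N^\star=(1-\tau)\psi_N^\star+\tau\gamma_N^\star\G_N(\psi_N^\star)$ to collapse to $\psi_N^\star=\gamma_N^\star\G_N(\psi_N^\star)$; testing this against arbitrary $\varphi\in\X_N$ and using the definitions of $\G_N$ and $\gamma_N^\star$ produces exactly~\eqref{eq:discreteSE} with $E_N^\star=\gamma_N^\star$, whence $E_N^\star=\norm{\G_N(\psi_N^\star)}_\H^{-1}=\norm{\psi_N^\star}_\H^2=2\E_N^\star$ as in the continuous theorem.

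The main obstacle I anticipate is the step identifying the subsequential limit as a genuine critical point rather than merely an accumulation point of the trajectory, since \emph{a priori} the whole sequence need not converge. The cleanest route is to show the vanishing increments property and then characterise $\psi_N^\star$ through the limiting fixed-point equation; one must be careful that $\gamma_N^\star=\lim_j\gamma_N^{n_j}$ is well-defined and strictly positive, which follows because $(\G_N(\psi),\psi)_{\Lom}>0$ is bounded away from zero on the unit sphere of the finite-dimensional space $\X_N$ by coercivity of $\G_N$. The final assertion, that every other $\H$-limit point $\Psi_N^\star$ of $\{\psi_N^n\}$ is likewise a constrained eigenfunction at the same energy level $E_N^\star$, then follows by repeating the identification argument along the corresponding subsequence and noting that $\E(\Psi_N^\star)=\E_N^\star$ forces the same eigenvalue $E_N^\star=2\E_N^\star$; since all of these deductions are finite-dimensional analogues of the established continuous statements, I expect no genuinely new difficulty beyond bookkeeping.
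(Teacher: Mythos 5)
Your proposal is correct and follows essentially the same route as the paper: the paper's ``proof'' is simply a citation of \cite[Cor.~4.11]{HenningPeterseim:18} together with a pointer to Lemma~\ref{lem:discreteconstraint} for the additional orthogonality constraints, which is exactly the structure you use (constraint preservation via Lemma~\ref{lem:discreteconstraint}, then the Henning--Peterseim energy-dissipation and limit-identification argument, simplified by finite-dimensionality of $\X_N$). Your reconstruction of the cited argument---summable energy decrements, Bolzano--Weierstrass in $\X_N$, and the limiting fixed-point identity yielding~\eqref{eq:discreteSE}---is sound, so there is no substantive gap relative to what the paper asserts.
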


For the proof we refer to~\cite{HenningPeterseim:18} and draw attention to Lemma~\ref{lem:discreteconstraint}.

\begin{remark}\label{rem:tauN}
Following the strategy in~\cite[Remark 3.1]{HeidStammWihler:19}, we propose the following time step selection within~\eqref{eq:discreteGF} for practical computations:
\begin{align*} 
 \tau_N^{n} = \max\left\{2^{-m}:\, \E(\psi_N^{n+1}(2^{-m})) < \E(\psi_N^n),\,m\ge 0\right\},\qquad  n \geq 0.
\end{align*}
where, for~$0<s\le 1$, we write $\psi_N^{n+1}(s)$ to denote the output of the discrete GFI \eqref{eq:discreteGF} based on the time step $\tau_N^n=s$ and on the previous approximation~$\psi_N^n$. We emphasize that in view of Proposition~\ref{prop:discreteconvergence} this strategy is well-defined, and guarantees the energy decay in each iterative step. However, for the sake of keeping the computational costs minimal, we fix the time step $\tau=1$ in the \emph{local} GFI in Algorithm~\ref{alg:ref} below, which is in most cases a reasonable choice, cf.~\cite[Remark 3.1]{HeidStammWihler:19}.
\end{remark}

\begin{remark}
Even though Lemma~\ref{lem:discreteconstraint} is in theory true, the orthogonality of $\psi_N^n$ to the given eigenstates $\psi_1,\dotsc,\psi_m$ may get lost when numerical computations are carried out. Thus, in praxis, we slightly modify the discrete GFI~\eqref{eq:discreteGF} by employing the orthogonal projection onto the set 
\[O^{\psi_1^m}_N:=\mathrm{span}\{\psi_{N,1},\dotsc,\psi_{N,m}\}^\perp \subseteq \X_N,\] where $^\perp$ denotes the orthogonal complement with respect to the $\Lom$-inner product and $\psi_{N,i} \in \X_N$ is defined as in the proof of Lemma~\ref{lem:discreteconstraint}. Upon denoting the $\Lom$-orthogonal projection by $\mathrm{P}^{\psi_1^m}_N:\X_N \to O^{\psi_1^m}_N$, the discrete GFI in our computations below is given, for $n \geq 0$, by 
\begin{align} \label{eq:GFIcomp}
\psi_N^{n+1}=\frac{\mathrm{P}^{\psi_1^m}_N(\widehat \psi_N^n)}{\norm{\mathrm{P}^{\psi_1^m}_N(\widehat \psi_N^n)}_{\Lom}},
\end{align}
where $\widehat \psi_N^n$ is defined as in \eqref{eq:discreteGF2}.
\end{remark} 

\subsection{Local energy decay and adaptive mesh refinements}
\label{ssec:LocEn}

As solutions to Schr\"{o}dingers equation may exhibit local features, we shall employ an adaptive mesh refinement procedure. In particular, we will use the strategy from~\cite[\S 3.3]{HeidStammWihler:19}, which, in turn, is based on the one from~\cite{HoustonWihler:16}. In the following, we will recall this adaptive mesh refinement method. Except for some minor modifications that take into account the additional constraints, we more or less copy from~\cite[\S 3.3]{HeidStammWihler:19}. \\

For any $\kappa \in \T_N$, we denote by $\P_\kappa$ the element patch compromising of $\kappa$ and its immediate facewise neighbours. The modified patch $\Pref_\kappa$ is obtained by a red-green refinement of $\P_\kappa$: In particular, $\kappa$ is divided into four triangles by connecting the midpoints of the edges, and any introduced hanging nodes are removed by connecting them to the opposite nodes of the facewise neighbours, see Figure~\ref{fig:ref} for a visualization. We emphasize that this procedure ensures the shape regularity of the mesh, we refer to~\cite{BankShermanWeiser:83}. 

\begin{figure}[t]
\begin{center}
\begin{tabular}{cc}
\includegraphics[scale=0.2]{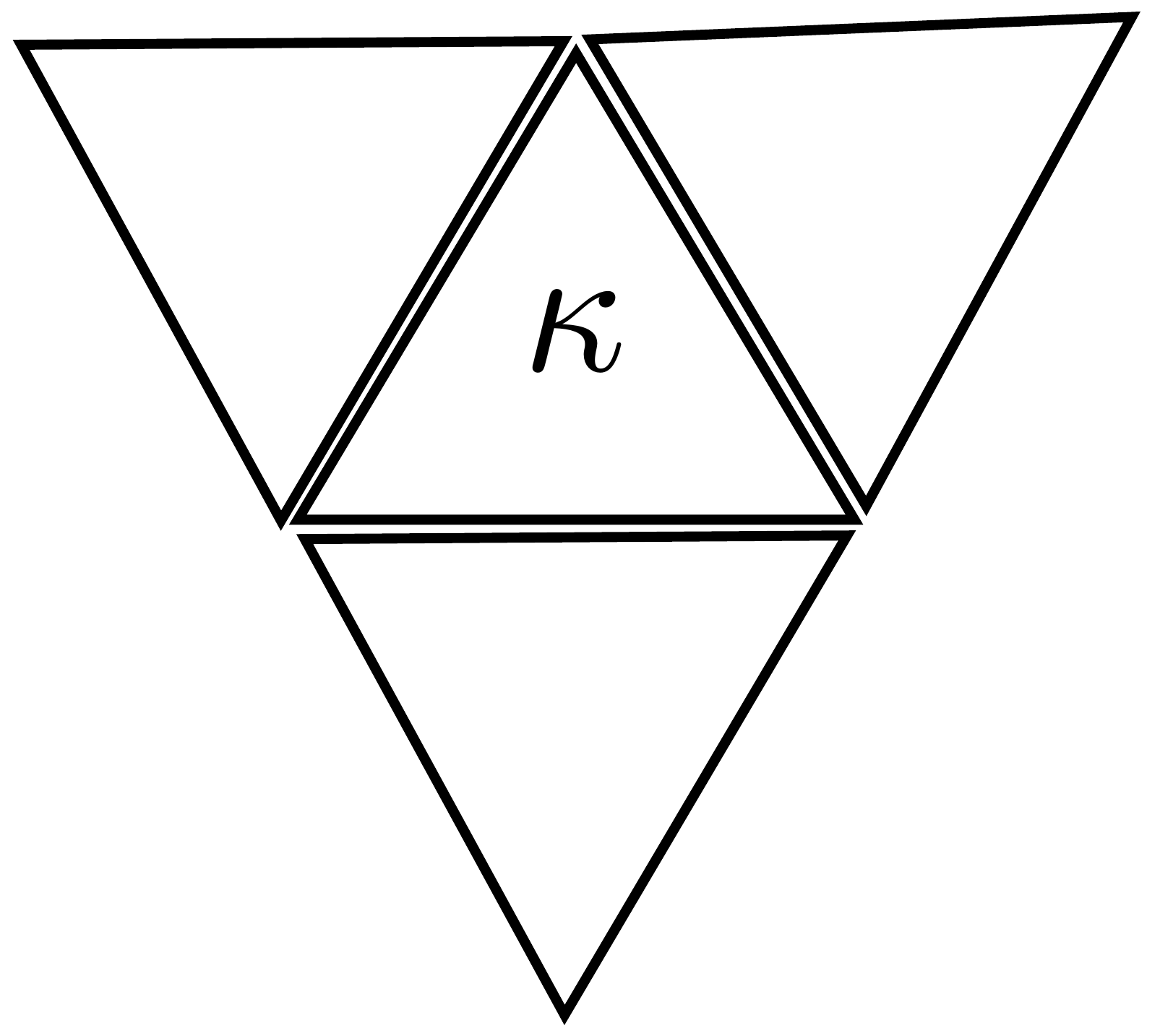} &
\includegraphics[scale=0.2]{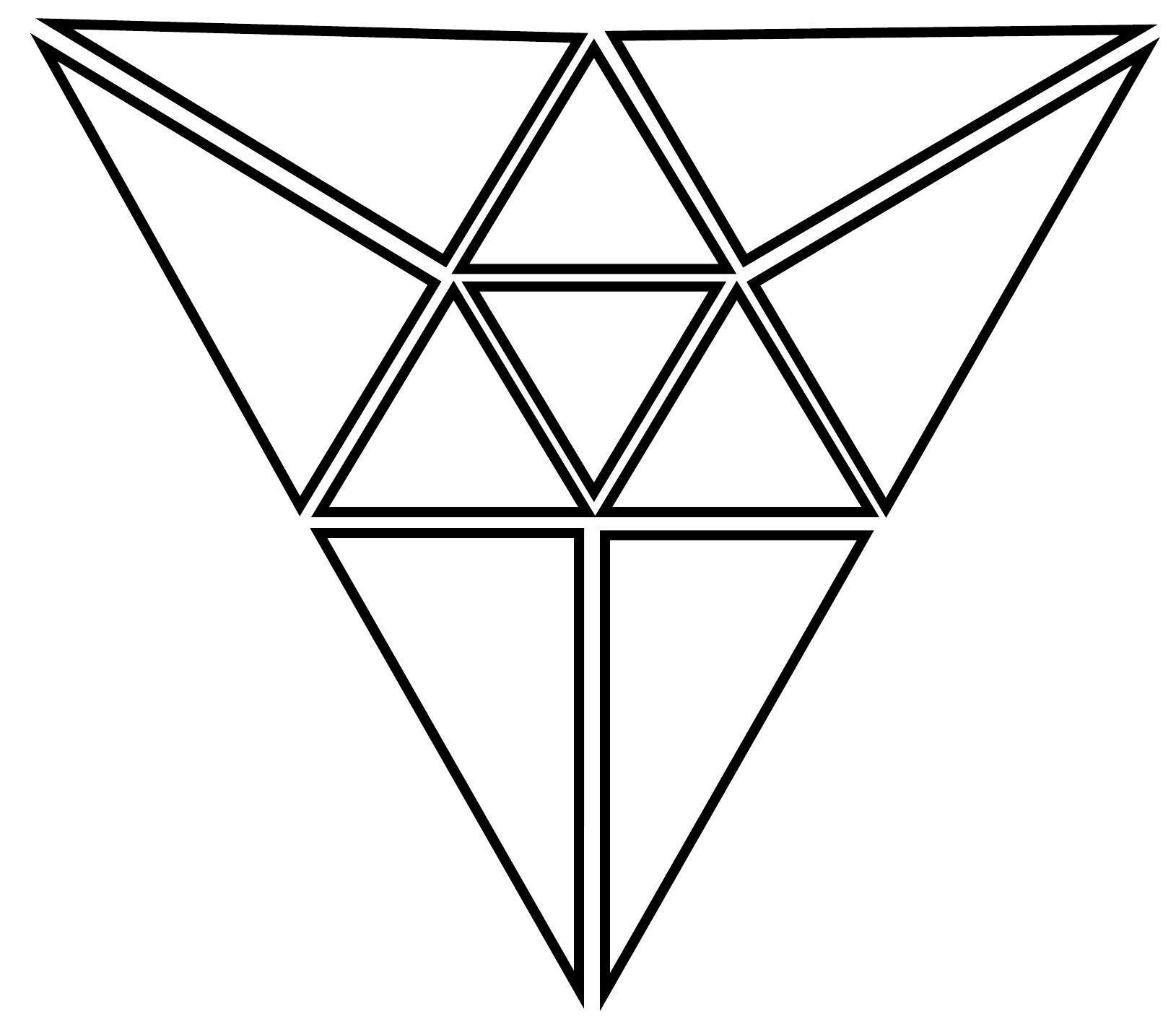}
\end{tabular}
\end{center}
\caption{ Local element patches associated to a triangular element~$\kappa$. Left: Mesh patch~$\P_\kappa$ consisting of the element $\kappa$ and its facewise neighbours. Right: Modified patch $\Pref_\kappa$ constructed based on red-refining $\kappa$ and on green-refining its neighbours. The picture is borrowed from~\cite{HoustonWihler:16}}
\label{fig:ref}
\end{figure}

We further denote by $\{\xi^1_\kappa,\ldots,\xi^{m_\kappa}_\kappa\}$ the basis functions of the locally supported space $\V(\Pref_\kappa)$. Then, for any $\varphi \in \X_N$, we introduce the extended space 
\[
\Vh(\Pref_\kappa;\varphi):=\Span\{\xi^1_\kappa,\ldots,\xi^{m_{\kappa}}_\kappa,\varphi\}.
\] 
Suppose that we have found an accurate approximation $\psi_N^n \in \X_N$ of the discrete Schr\"{o}dinger equation~\eqref{eq:discreteSE}, for some~$n\ge 0$. Then, by performing one \emph{local} discrete GFI with time step $\tau=1$ in $\Vh(\Pref_\kappa;\psi_N^n) \subset \HS$ we obtain a new local approximation, denoted by $\widetilde{\psi}_{N,\kappa}^n \in \Vh(\Pref_\kappa;\psi_N^n)$, with $g(\widetilde{\psi}_{N,\kappa}^n)=0$. We stress that this step, i.e. the local discrete GFI, hardly entails any computational costs since $\Vh(\Pref_\kappa;\psi_N^n) \subset \HS$ is a low dimensional space. Moreover, since the local discrete iterations are independent, they can be computed in parallel for all elements in the mesh. 

By modus operandi of the discrete GFI~\eqref{eq:discreteGF}, the above construction leads in general, see Remark~\ref{rem:tauN}, to the (local) energy decay
\begin{align} \label{eq:localenergydecay}
 -\Delta \E_N^n(\kappa):=\E(\widetilde \psi_{N,\kappa}^n)-\E(\psi_N^n) \leq  0,
\end{align}
for all $\kappa \in \mathcal{T}_N$. The value $\Delta \E_N^n(\kappa)$ indicates the potential energy reduction due to a refinement of the element $\kappa$. This observation motivates the energy-based adaptive mesh refinement procedure outlined in Algorithm~\ref{alg:ref}, which is borrowed from~\cite{HeidStammWihler:19}.

\begin{algorithm}
\caption{Energy-based adaptive mesh refinement~\cite[Alg.~1]{HeidStammWihler:19}}
\label{alg:ref}
\begin{algorithmic}[1]
\State Prescribe a mesh refinement parameter~$\theta\in(0,1)$. 
\State Input a finite element mesh~$\T_N$, and a wavefunction~$\psi^{n}_N \in \X_N$ with $g(\psi_N^n)=0$, for some~$n\ge 1$.
\For {all elements $\kappa\in\T_N$}
	\State\multiline{Perform one discrete GFI-step~\eqref{eq:discreteGF} in the low-dimensional space $\Vh(\Pref_\kappa;\psi_N^n)$ to obtain a potentially improved local approximation~$\widetilde \psi_{N,\kappa}^n$.}  
	\State Compute the local energy decay~$\Delta \E_N^n(\kappa)$ from~\eqref{eq:localenergydecay}.    
\EndFor
\State \textsc{Mark} a subset ~$\mathcal{K} \subset \mathcal{T}_N$ of minimal cardinality which fulfils the D\"orfler marking criterion
\[
\sum_{\kappa \in \mathcal{K}} \Delta \E_N^n(\kappa) \geq \theta \sum_{\kappa \in \mathcal{T}_N} \Delta \E_N^n(\kappa).
\]
\State \textsc{Refine} all elements in~$\mathcal{K}$ for the sake of generating a new mesh~$\T_{N+1}$.
\end{algorithmic}
\end{algorithm}

\subsection{Adaptive strategy}
Next we deal with the decision whether GFI on the given discrete space or local mesh refinements should be given preference. As for the local mesh refinement procedure, cf.~Algorithm~\ref{alg:ref}, we will borrow the strategy from~\cite[\S 3.4]{HeidStammWihler:19}.\\

From a practical viewpoint, once the discrete approximation $\psi_N^n \in \X_N$ is sufficiently close to a discrete eigenfunction of~\eqref{eq:discreteSE}, it is not worth doing more GFI-steps~\eqref{eq:GFIcomp} on the given space, but we should rather refine the mesh $\mathcal{T}_{N}$ to obtain a hierarchically enriched finite element space $\X_{N+1}$. Then, we may embed the final approximation $\psi_N^n \in \X_N$ into the enriched space $\X_{N+1}$ in order to obtain an initial guess $\psi_N^n=:\psi_{N+1}^0 \in \X_{N+1}$. We emphasize that, by Lemma~\ref{lem:discreteconstraint}, $g(\psi_N^n)=0$ for all $n,N \geq 0$ provided that $g(\psi_0^0)=0$.

Similar as in Section~\ref{ssec:LocEn}, the strategy for a a sensible interplay of the discrete GFI~\eqref{eq:GFIcomp} and the local mesh refinement should be solely energy-based. For that purpose, we monitor and compare two energy quantities:
\begin{enumerate}[(a)]
\item The energy increment of each iteration given by
\begin{align*} 
	\incNk := \E(\psi_N^{n-1}) -  \E(\psi_N^n),\qquad n\ge 1;
\end{align*}
\item The energy loss since the latest mesh refinement 
\begin{align*} 
	\dENk := \E(\psi_N^{0}) -  \E(\psi_N^n),\qquad n\ge1.
\end{align*}
\end{enumerate}
We emphasize that, in general, the latest mesh refinement largely contribute to the quantity $\dENk$. Thus, once $\incNk$ is small compared to $\dENk$, we expect a greater benefit from a local mesh refinement than from performing further GFI~\eqref{eq:discreteGF} on the given discrete space $\X_N$. Specifically, for~$n\ge 1$, we stop the GFI on $\X_N$ as soon as
\[
	\incNk \le \gamma\dENk,
\]
for some parameter $0<\gamma<1$.   

This adaptive scheme was proposed in~\cite[Algorithm 2]{HeidStammWihler:19} and will be recalled in Algorithm~\ref{alg:adaptive}. However, we have to slightly adapt this procedure in account of the modified discrete GFI~\eqref{eq:GFIcomp} applied in practical computations. Moreover, we have not included any stopping criterion for the adaptive algorithm, since this may depend on the specific purpose of the application. We, however, refer to~\cite[Algorithm 2]{HeidStammWihler:19} for a possible stopping criterion. 

\begin{algorithm}
\caption{Adaptive finite element gradient flow procedure~\cite[Alg.~2]{HeidStammWihler:19}}
\label{alg:adaptive}
\begin{algorithmic}[1]
\State Prescribe two parameters~$\theta, \gamma\in(0,1)$.
\State Input the already known eigenstates $\psi_1,\dotsc,\psi_m$.
\State Choose a sufficiently fine initial mesh~$\T_0$, and an initial guess $\psi_0^0 \in \X_0$ with $g(\psi^0_0)=0$.
\State Set~$N:=0$.
\Loop 
	\State Set $n:=1$, perform one discrete GFI~\eqref{eq:GFIcomp} in $\X_N$ to obtain $u_{N}^{1}$.
	\State Compute the indicator $\mathrm{inc}_N^1$ (which equals~$\Delta\E_N^1$). 
	\While {$\incNk > \gamma\dENk$} 
		\State Update $n\gets n+1$.
		\State Perform one GFI~\eqref{eq:GFIcomp} in $\X_N$ to obtain $u_{N}^{n}$ (starting from $u_N^{n-1}$).
		\State Compute the indicators $\incNk$ and $\dENk$.
	\EndWhile
		\State \multiline{\textsc{Mark} and adaptively \textsc{refine} the mesh~$\T_N$ using Algorithm~\ref{alg:ref} to generate a new mesh~$\T_{N+1}$.}
		\State Define $\psi_{N+1}^0:=\psi_N^n \in \X_{N+1}$ by canonical embedding $\X_N \hookrightarrow \X_{N+1}$.
		\State Update~$N\gets N+1$.
\EndLoop
\end{algorithmic}
\end{algorithm}

\begin{remark}
The computational work for one loop in Algorithm~\ref{alg:adaptive} scales with $\mathcal O(\textrm{dim}(\mathbb X_N)^\alpha)$, where $\alpha\ge 1$ depends on the global finite element solver, and thus exhibits a similar complexity as standard adaptive finite element discretisation procedures for linear problems (provided that the number of GFI steps remain reasonably modest); we refer~to~\cite[\S 3.5]{HeidStammWihler:19} for a profound analysis of the computational work. However, this requires the parallel computing of the local GFI in Algorithm~\ref{alg:ref}.  
\end{remark}

\section{Numerical Experiments}\label{sec:numerics}
\renewcommand{\thefootnote}{\textit{\alph{footnote}}}

We test our Algorithm~\ref{alg:adaptive} for some experiments in the two dimensional physical space, i.e.,~$d=2$, with Cartesian coordinates denoted by $\bm{x}=(x,y) \in \mathbb{R}^2$. In all examples, we set $\theta=0.5$ and $\gamma=0.1$, and stop the computations once the number of degrees of freedom (i.e.~the dimension of the finite element space~$\X_N$) exceeds~$10^5$. We will only take care of excited states of Schr\"{o}dingers equation, since the ground states of the Experiments~\ref{sec:Lshaped}--\ref{sec:singular} were already approximated in~\cite{HeidStammWihler:19}. In particular, in each of the experiments below, we will consider the ground state approximations from~\cite{HeidStammWihler:19} as the known ground state $\psi_1$. Moreover, the initial mesh $\mathcal{T}_0$ in all our experiments below is coarse and uniform. \\

Since no reference eigenvalues are available in our Experiments~\ref{sec:potwells} and~\ref{sec:singular} below, we will use an a posteriori residual estimator for the evaluation of the computed approximate solution of Schr\"{o}dingers equation~\eqref{eq:SEweak}. Recall that, for an $\Lom$-normalized eigenfunction $\psi$ with corresponding eigenvalue $E$, it holds
\[(\psi,\varphi)_{\H}=E (\psi,\varphi)_{\Lom} \qquad \forall \varphi \in \HS,\]
with $E=\frac{1}{(\G(\psi),\psi)_{\Lom}}$, cf.~\eqref{eq:eigenvalue}. Hence, for any $\psi \in \HS$ with $\norm{\psi}_{\Lom}=1$, we introduce the residual $\R(\psi) \in \mathrm{H}^{-1}(\Omega)$ defined by
\begin{align*} 
\dprod{\R(\psi),\varphi}:=(\psi,\varphi)_{\H}-\frac{1}{(\G(\psi),\psi)_{\Lom}}(\psi,\varphi)_{\Lom}, \qquad \varphi \in \HS;
\end{align*}
we emphasize that this is a neat choice for the residual in the given setting as it naturally incorporates the gradient flow, cf.~\eqref{eq:simplifiedflow}.
By invoking the definition of the two inner products, we obtain
\begin{align*} 
\dprod{\R(\psi),\varphi}=\int_\Omega \nabla \psi \nabla \varphi+(V-e_\psi) \psi \varphi \dx, \qquad \varphi \in \HS,
\end{align*}
where $e_\psi:=(\G(\psi),\psi)_{\Lom}^{-1}$. Then, for any $\psi_N^n \in \X_N$, a standard residual-based a posteriori error analysis (see, e.g.,~\cite{Verfurth:13}) yields the upper bound
\begin{align} \label{eq:residualbound}
\sup_{\varphi \in S_{\H}} \dprod{\R(\psi_N^n),\varphi} \leq C_I \left( \sum_{K \in \mathcal{T}_N} \eta_K^2(\psi_N^n) \right)^{\nicefrac{1}{2}},
\end{align}
where $C_I>0$ is an interpolation constant (only depending on the polynomial degree $p$ and on the shape regularity of the mesh), and 
\begin{align*} 
\eta_K^2(\psi_N^n)=h_K^2 \norm{-\Delta \psi_N^n+(V-e_{\psi_N^n})\psi_N^n}_{\mathrm{L}^2(K)}^2+\frac{1}{2} h_K \norm{\jmp{\nabla \psi_N^n}}_{\mathrm{L^2}(\partial K \setminus \Gamma)}^2, \qquad K \in \mathcal{T}_N
\end{align*}
is a computable bound; here, $h_K:=\mathrm{diam}(K)$ denotes the diameter of $K \in \mathcal{T}_N$. Moreover, for an edge $e \subset \partial K^+ \cap \partial K^{-}$, which is the intersection of two neighbouring elements $K^{\pm} \in \mathcal{T}_N$, we signify by $\jmp{\bm v}|_e=\bm{v}^{+}|_e \cdot \bm{n}_{K^+}+\bm{v}^{-}|_e\cdot \bm{n}_{K^{-}}$ the jump of a (vector-valued) function $\bm{v}$ along~$e$, where $\bm{v}^{\pm}|_e$ denote the traces of the function $\bm{v}$ on the edge $e$ taken from the interior of $K^{\pm}$, respectively, and $\bm{n}_{K^{\pm}}$ are the unit outward normal vectors on $\partial K^{\pm}$, respectively.

\subsection{Laplace EVP on the $\mathrm{L}$-shaped domain} \label{sec:Lshaped}
We begin by testing our algorithm for the Laplace eigenvalue problem, which is to find $\psi \in \HS$ and $E>0$ such that
\begin{align} \label{eq:laplace}
\int_\Omega \nabla \psi \nabla \varphi \dx= E (\psi,\varphi)_{\Lom} \qquad \forall \varphi \in \HS;
 \end{align}
here, $\Omega:=(0,2)^2 \setminus [1,2] \times [0,1]$ is an $\mathrm{L}$-shaped domain. 
This is a well examined problem, for whose eigenvalues (sharp) lower and upper bounds are known. In particular, for the evaluation of the eigenvalues approximation in our computations, we will use the lower bounds from~\cite{FoxMoler:67} as the reference values. 
\begin{enumerate}[(a)]
\item We start with the ground state $\psi_1$ from~\cite{HeidStammWihler:19}. In order to find a new eigenfunction of~\eqref{eq:laplace}, we run Algorithm~\ref{alg:adaptive} with the initial guess $\psi_0^0:=\mathrm{P}_0^{\psi_1^1}(\widetilde \psi_0^0) \norm{\mathrm{P}_0^{\psi_1^1}(\widetilde \psi_0^0)}_{\Lom}^{-1}$, where $\widetilde \psi_0^0 \in \X_0$ is the linear interpolant of the function $(x,y) \mapsto |\mathrm{sin}(\pi x) \mathrm{sin}(\pi y)| \mathrm{sign}(y-1)$ in the element nodes. In Figure~\ref{fig:Lshaped2} (left), we observe the optimal convergence rate for the eigenvalue approximation. The approximated eigenstate $\psi_2$ is visualized in Figure~\ref{fig:Lshaped2} (right).

\begin{figure}
	\hfill
	\includegraphics[width=0.49\textwidth]{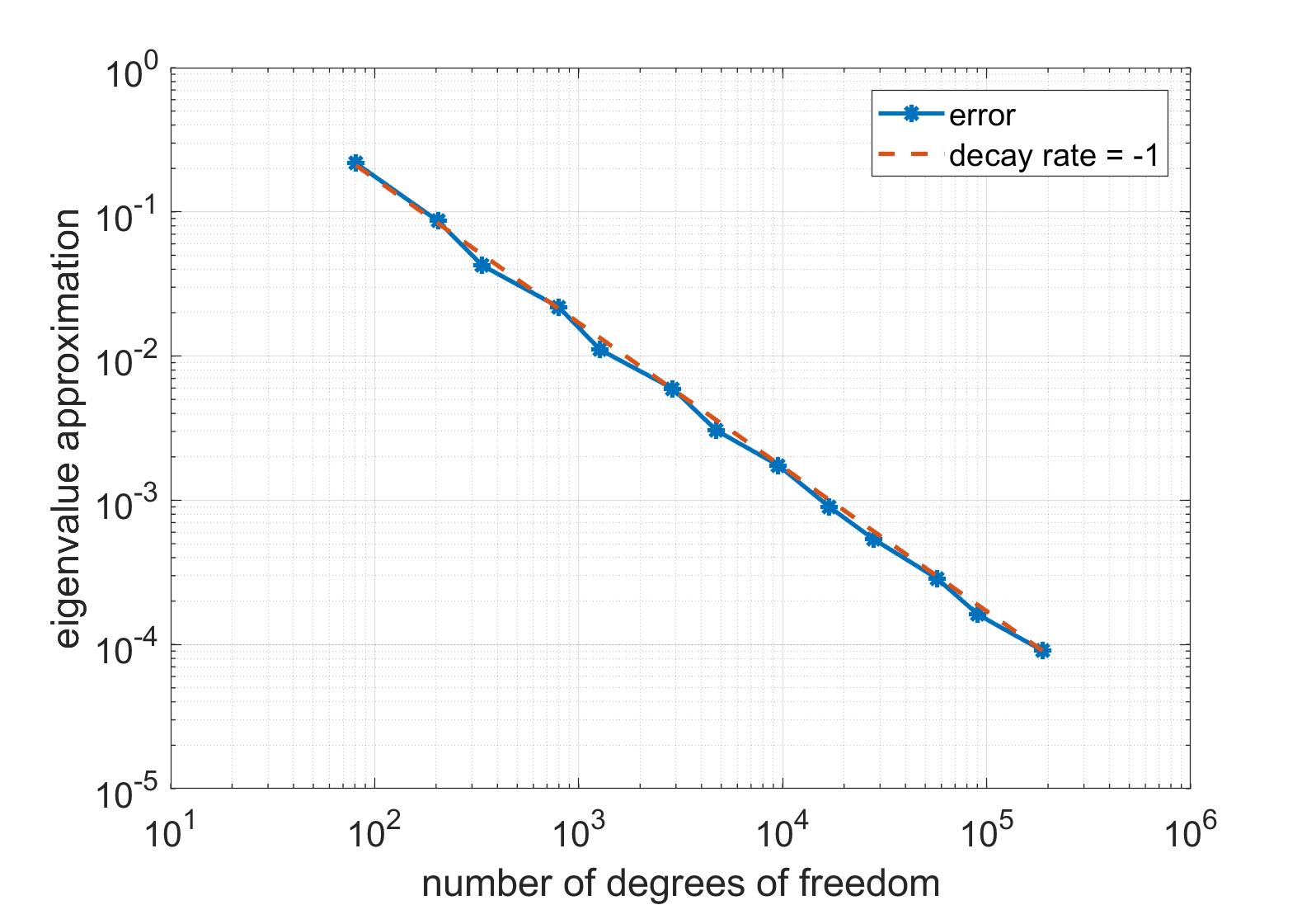}
	\hfill
	\includegraphics[width=0.49\textwidth]{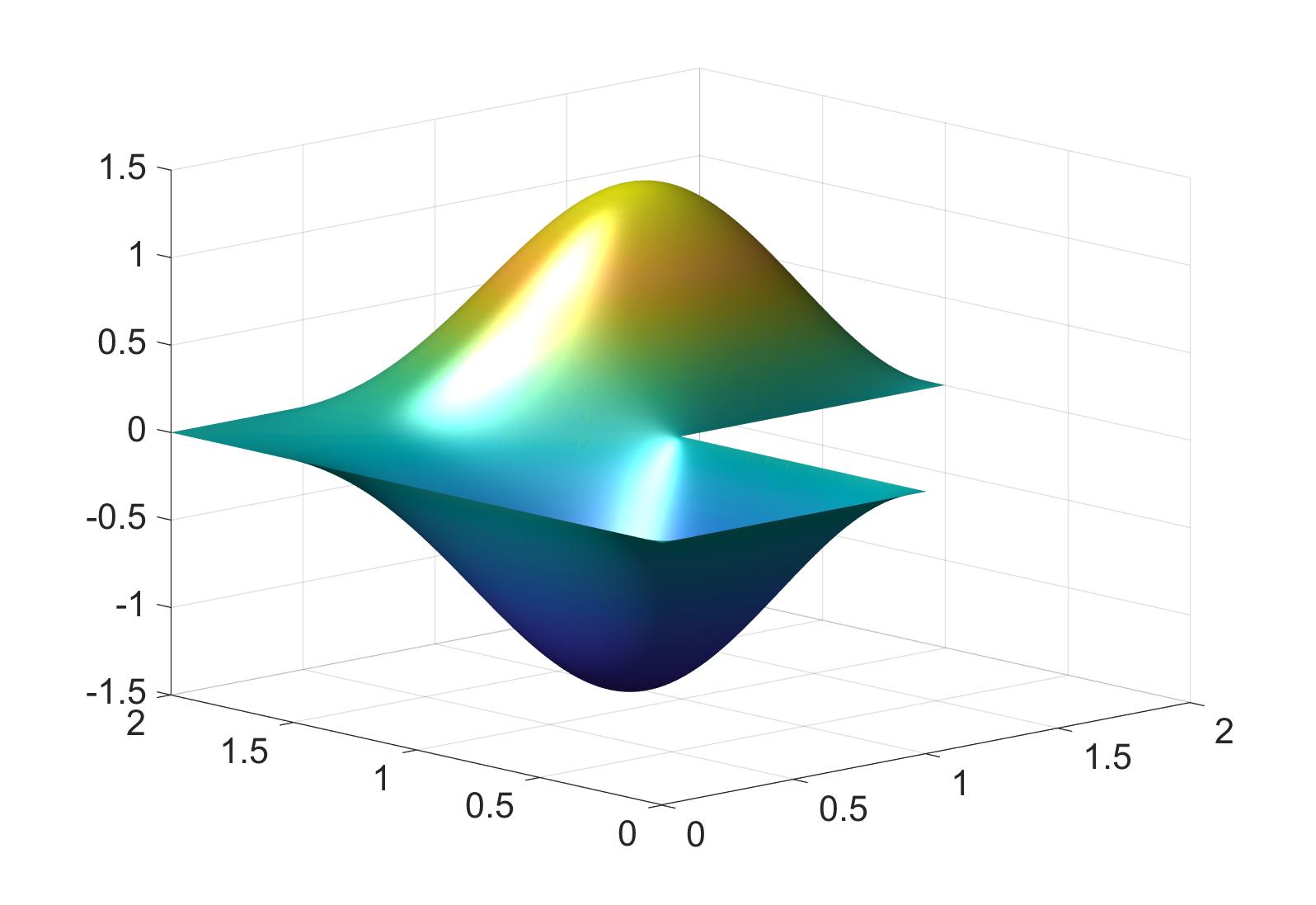}
	\hfill
	\caption{Experiment~\ref{sec:Lshaped} (a). Left: Convergence plot for the eigenvalue approximation. Right: Approximated eigenstate $\psi_2$.}
	\label{fig:Lshaped2}
\end{figure}

\item Next, we set $\psi_0^0:=\mathrm{P}_0^{\psi_1^2}(\widetilde \psi_0^0) \norm{\mathrm{P}_0^{\psi_1^2}(\widetilde \psi_0^0)}_{\Lom}^{-1}$, where $\widetilde \psi_0^0 \in \X_0$ denotes the linear interpolant of the function $(x,y) \mapsto \mathrm{sin}(\pi x) \mathrm{sin}(\pi y)$ in the element nodes. As before, the error of the eigenvalue decays at an (almost) optimal rate of $\mathcal{O}(|\mathcal{T}_N|^{-1})$, see Figure~\ref{fig:Lshaped3} (left). Figure~\ref{fig:Lshaped3} depicts the corresponding eigenstate approximation $\psi_3$.

\begin{figure}
	\hfill
	\includegraphics[width=0.49\textwidth]{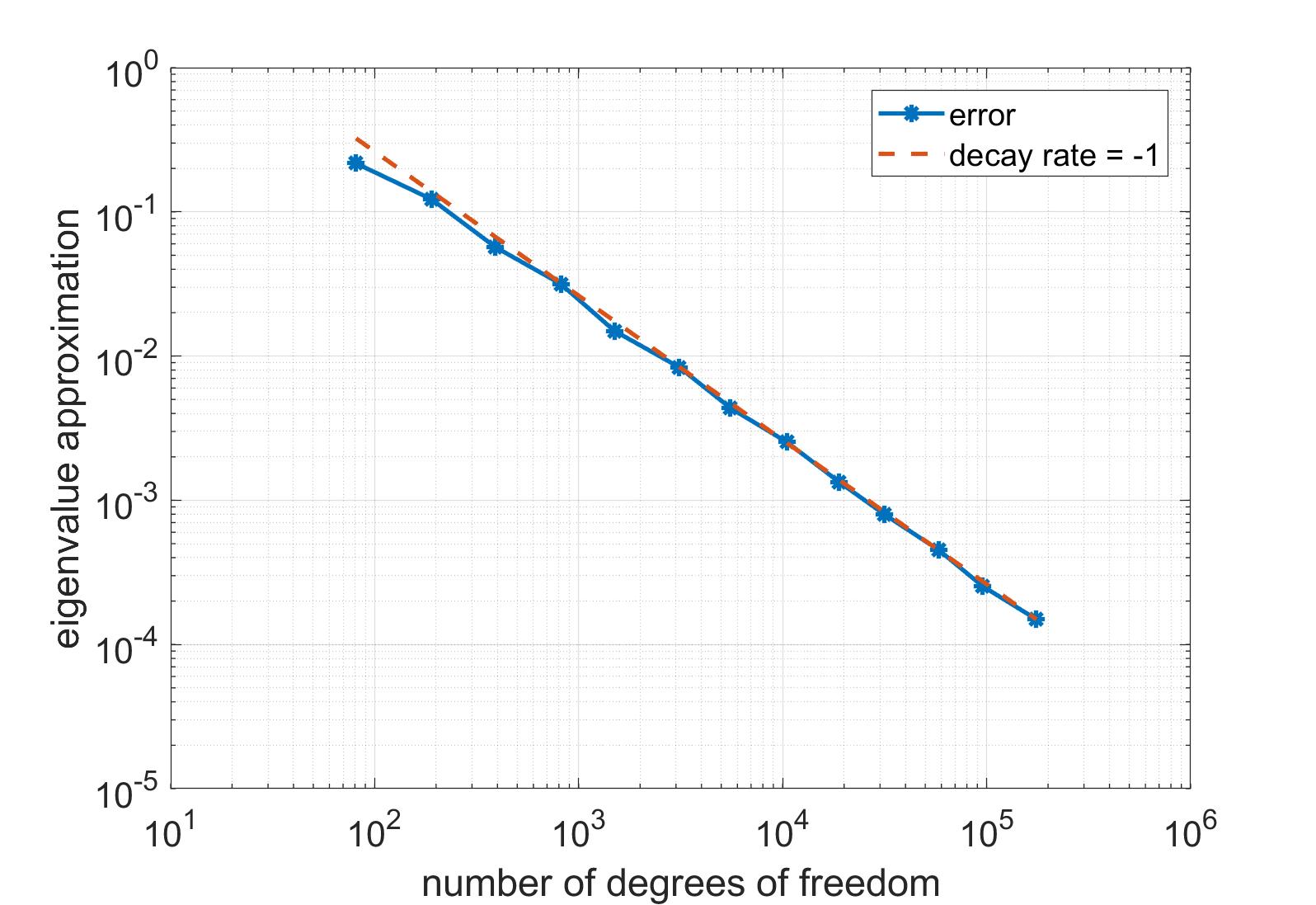}
	\hfill
	\includegraphics[width=0.49\textwidth]{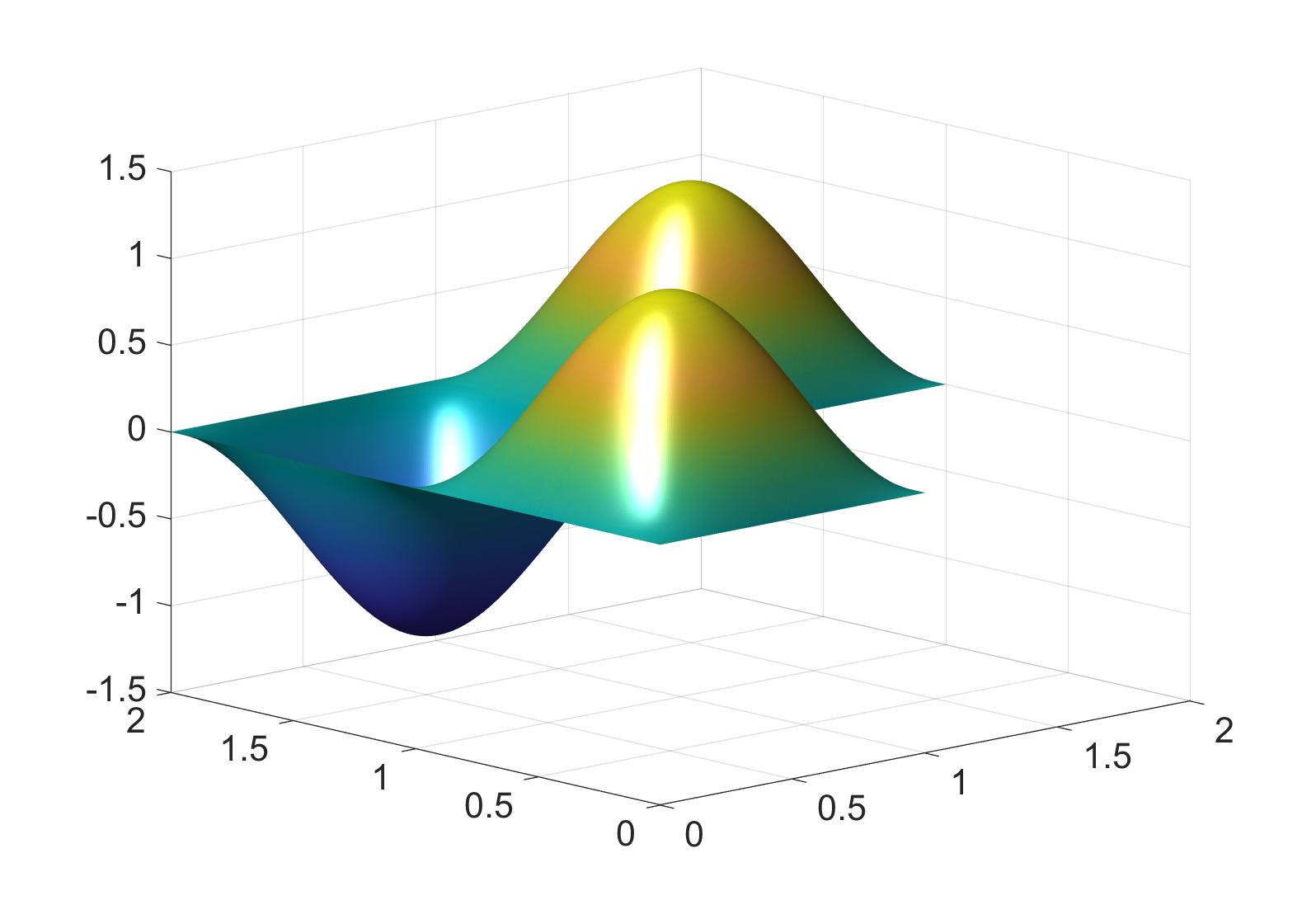}
	\hfill
	\caption{Experiment~\ref{sec:Lshaped} (b). Left: Convergence plot for the eigenvalue approximation. Right: Approximated eigenstate $\psi_3$.}
	\label{fig:Lshaped3}
\end{figure}

\item Finally, we choose the initial guess $\psi_0^0=\mathrm{P}_0^{\psi_1^3}(\widetilde \psi_0^0) \norm{\mathrm{P}_0^{\psi_1^3}(\widetilde \psi_0^0)}_{\Lom}^{-1}$, where $\widetilde{\psi}_0^0 \in \X_0$ is such that $\widetilde \psi_0^0(\bm{x})=c$ for any node $\bm{x}$ in the interior of the corresponding initial mesh $\mathcal{T}_0$ for some constant $c >0$; we remark that this was also  the initial guess, up to the projection, for the ground state computation in~\cite[Exp.~4.1]{HeidStammWihler:19}. Once more, the adaptive algorithm exhibits optimal convergence rate for the eigenvalue approximation, see Figure~\ref{fig:Lshaped4} (left). The approximated eigenstate is plotted in Figure~\ref{fig:Lshaped4} (right).

\begin{figure}
	\hfill
	\includegraphics[width=0.49\textwidth]{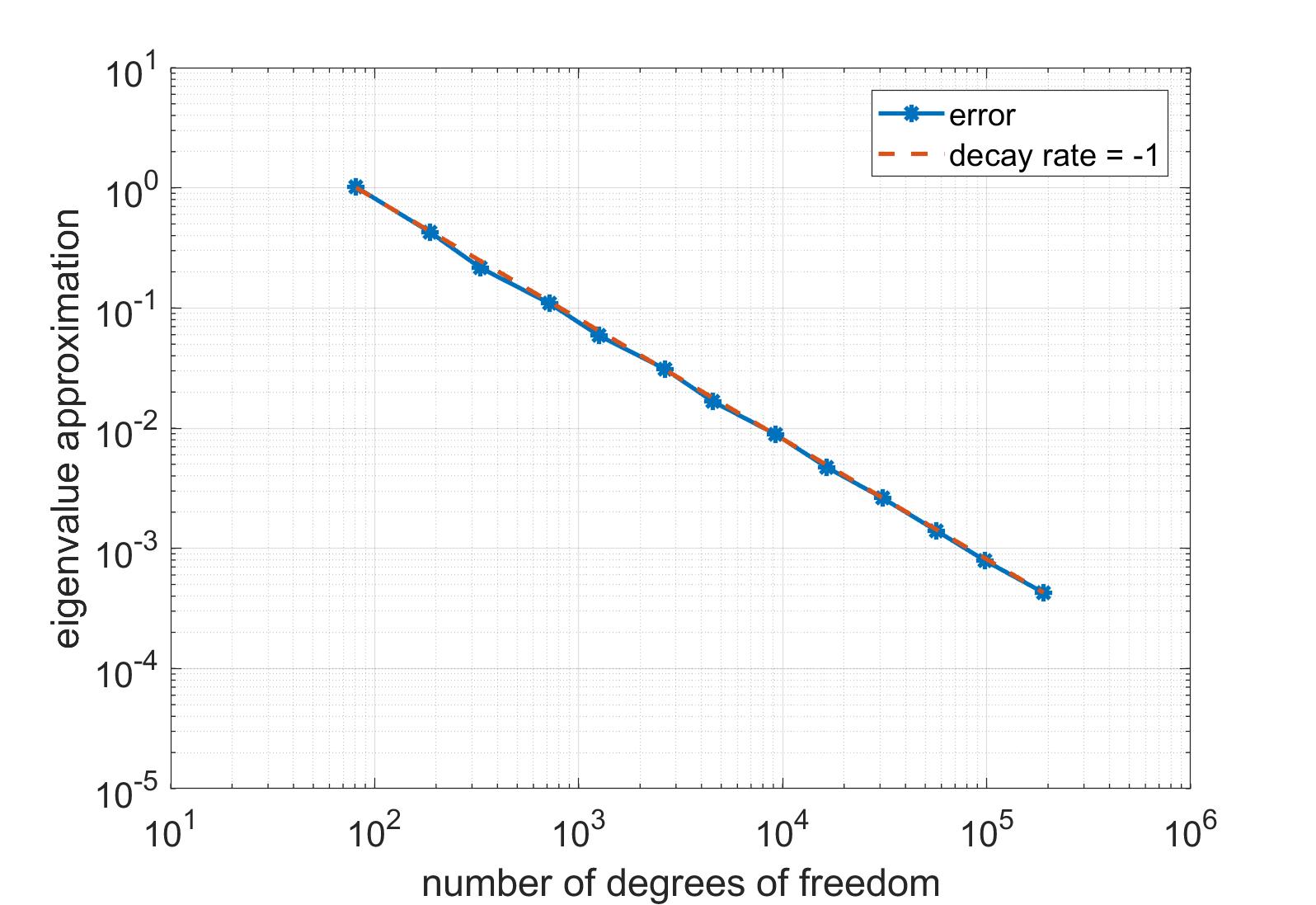}
	\hfill
	\includegraphics[width=0.49\textwidth]{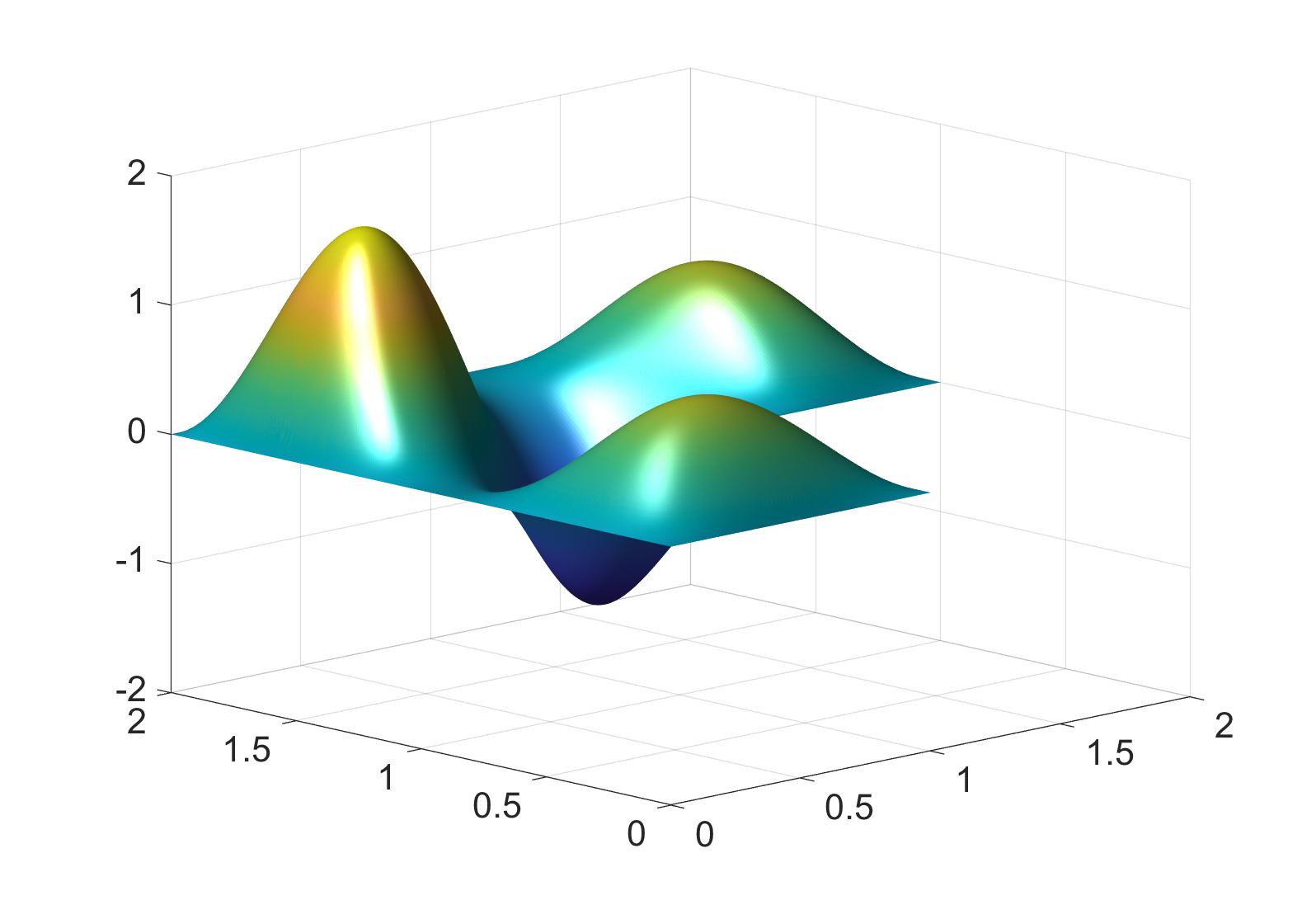}
	\hfill
	\caption{Experiment~\ref{sec:Lshaped} (c). Left: Convergence plot for the eigenvalue approximation. Right: Approximated eigenstate $\psi_4$.}
	\label{fig:Lshaped4}
\end{figure}

\end{enumerate}

\subsection{Schr\"{o}dingers equation with potential wells} \label{sec:potwells}

We consider the Experiment~4.4 from~\cite{HeidStammWihler:19}, where the potential $V$ is given by the sum of four Gaussian bells, see Figure~\ref{fig:potwells}; this experiment was originally examined in~\cite[Exp.~4.2]{LinStamm:17}, however, we employed a (constant) shift in the potential such that $V \geq 0$ in the underlying domain~$\Omega:=(0,2\pi)^2$. In this application, the initial guess will be $\psi_0^0=\mathrm{P}_0^{\psi_1^m}(\widetilde \psi_0^0) \norm{\mathrm{P}_0^{\psi_1^m}(\widetilde \psi_0^0)}_{\Lom}^{-1}$, with $m=1$ in (a) and $m=2$ in (b), where $\widetilde{\psi}_0^0 \in \X_0$ is such that $\widetilde \psi_0^0(\bm{x})=c$ for any node $\bm{x}$ in the interior of the corresponding initial mesh $\mathcal{T}_0$ for some constant $c >0$; moreover, $\psi_1$ is the ground state borrowed from~\cite[Exp.~4.4]{HeidStammWihler:19} and $\psi_2$ the excited state approximated in (a). As no reference values for the energies of the eigenstates are available, we will plot the residual bound~\eqref{eq:residualbound}, with ad-hoc selection $C_I=1$, against the number of degrees of freedom.

\begin{figure}
	\includegraphics[width=0.49\textwidth]{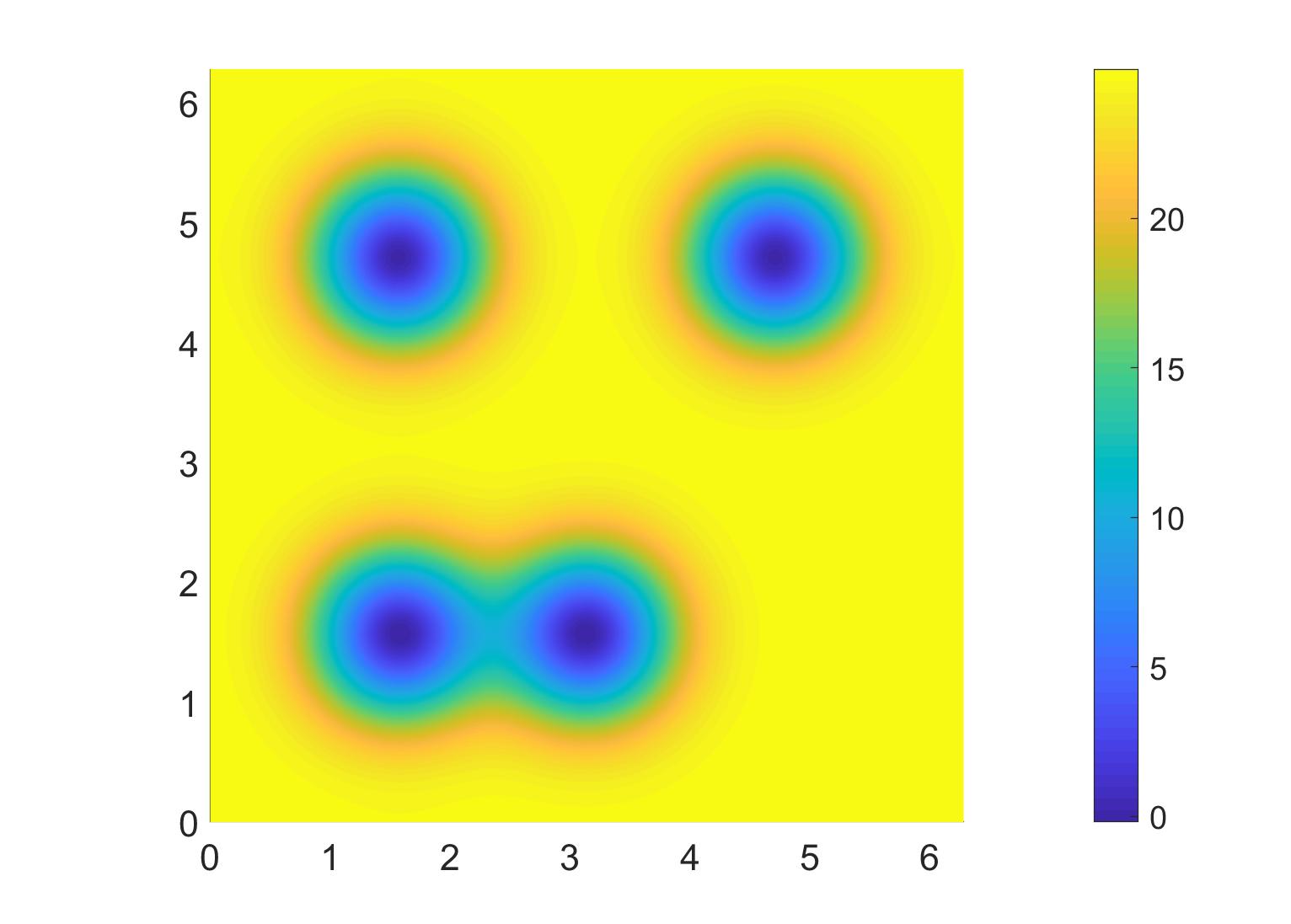}
	\caption{Experiment~\ref{sec:potwells}. Potential function~$V$ consisting of four Gaussian bells.}
	\label{fig:potwells}
\end{figure}

\begin{enumerate}[(a)]
\item In Figure~\ref{fig:potwellsconvergence} (left), we see that the residual decays at an optimal rate of $\mathcal{O}(|\mathcal{T}_N|^{-\nicefrac{1}{2}})$. The probability density of the approximated eigenstate $\psi_2$, which is given by $|\psi_2|$, is concentrated in the two wells in the upper part of the domain, see Figure~\ref{fig:potwells_psi2} (left). This was properly resolved by the adaptive mesh refinements, see Figure~\ref{fig:potwells_psi2} (right).
\item The algorithm exhibits close to optimal convergence rate for the residual, see Figure~\ref{fig:potwellsconvergence} (right). In contrast to before, there is also a small amplitude of $\psi_3$ in each of the two connected wells, see Figure~\ref{fig:potwells_psi3} (left). This was well recognised by the adaptive algorithm as illustrated in Figure~\ref{fig:potwells_psi3} (right).
\end{enumerate}

\begin{figure}
	\hfill
	\includegraphics[width=0.49\textwidth]{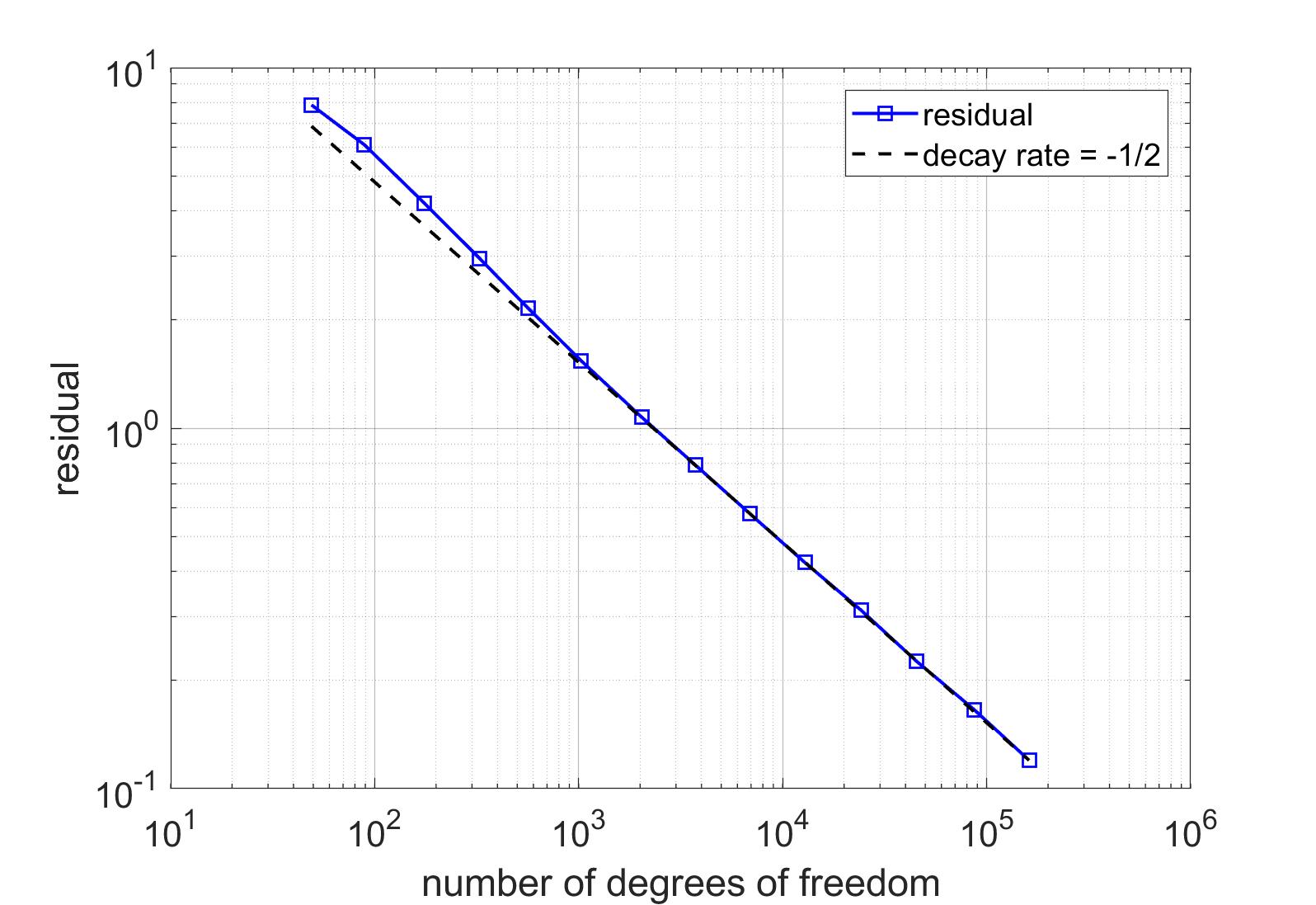}
	\hfill
	\includegraphics[width=0.49\textwidth]{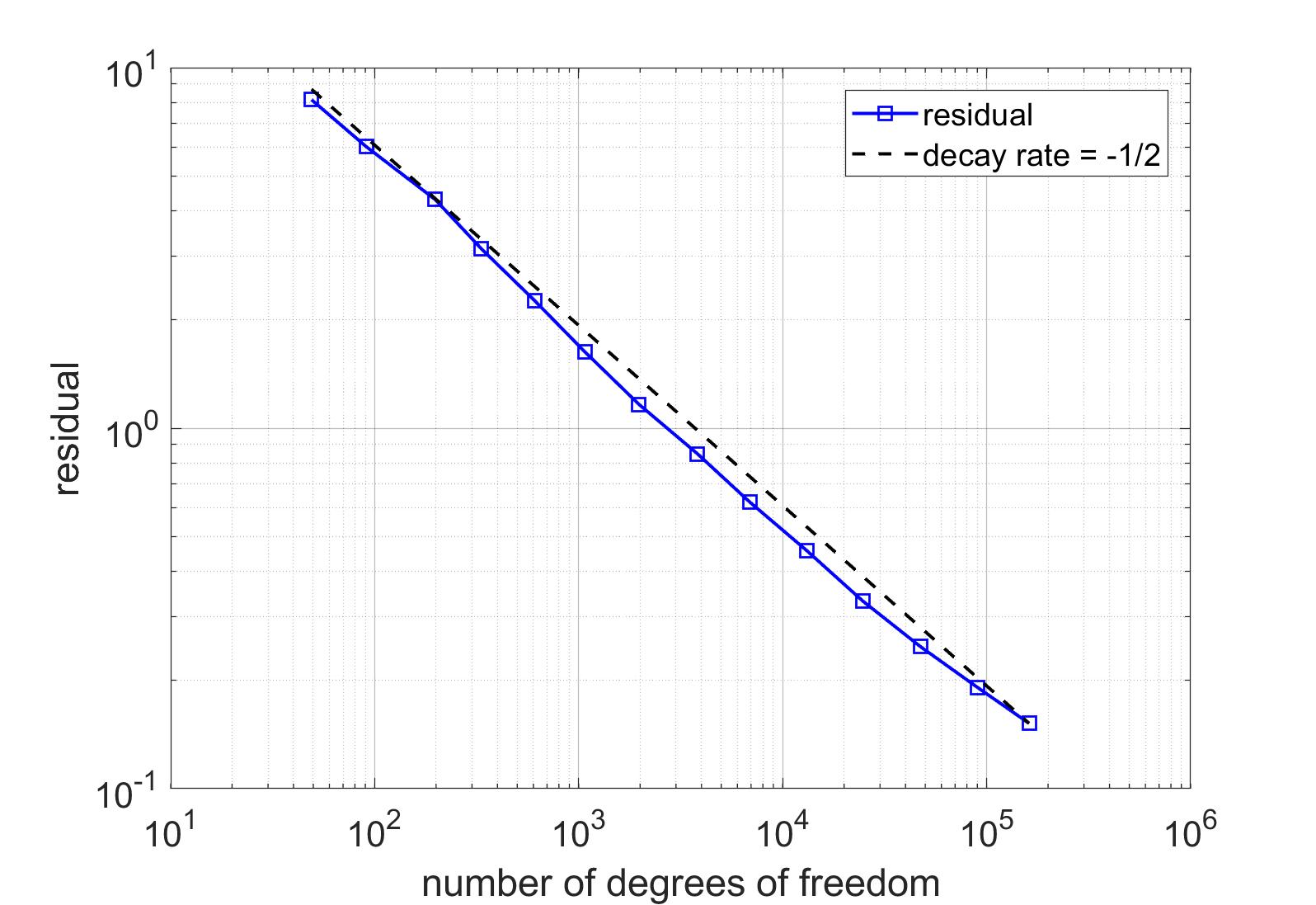}
	\hfill
	\caption{Experiment~\ref{sec:potwells}. Residual decay for the eigenstate approximation of $\psi_2$ (left) and $\psi_3$ (right), respectively.}
	\label{fig:potwellsconvergence}
\end{figure}

\begin{figure}
	\hfill
	\includegraphics[width=0.49\textwidth]{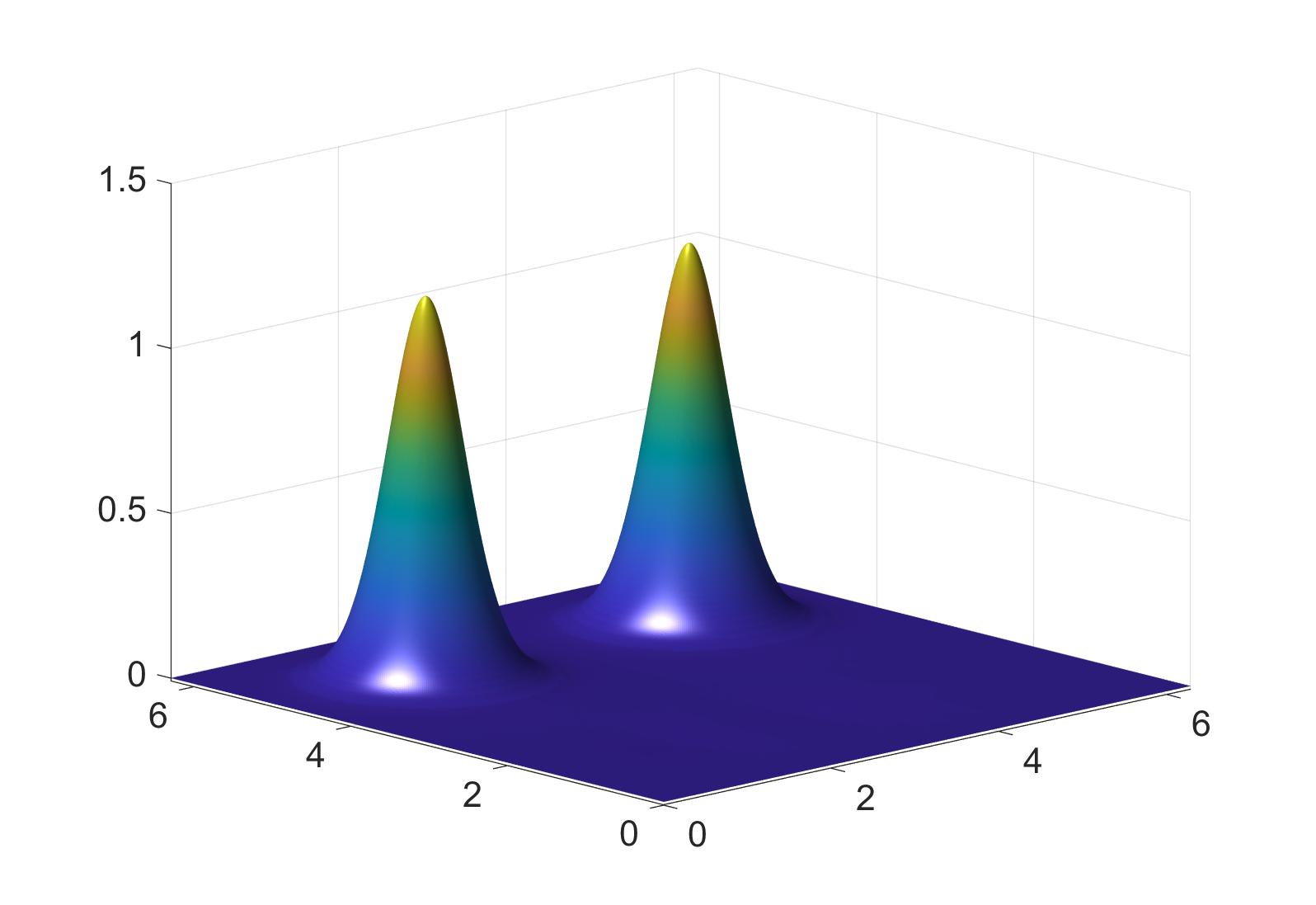}
	\hfill
	\includegraphics[width=0.49\textwidth]{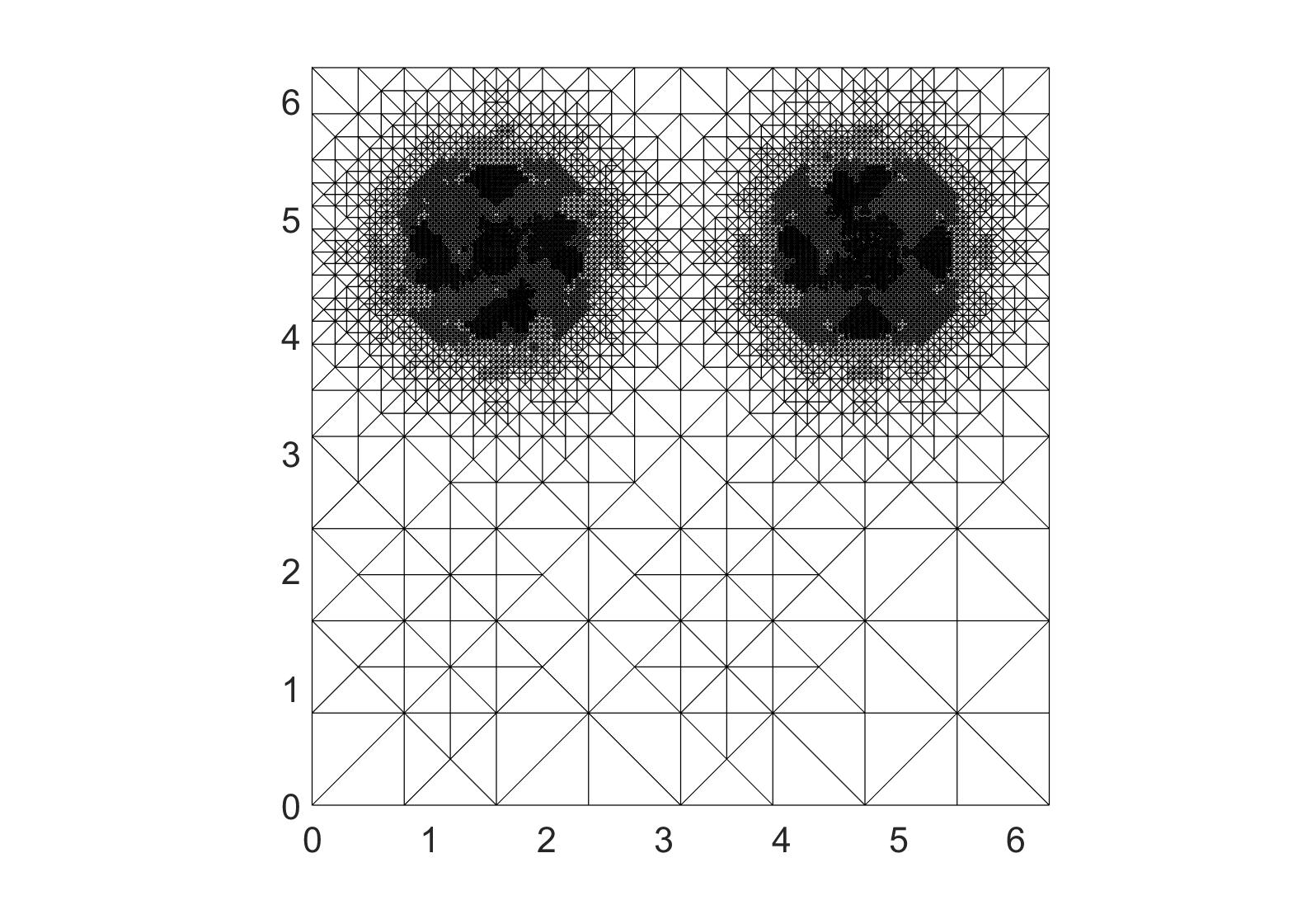}
	\hfill
	\caption{Experiment~\ref{sec:potwells} (a). Left: Approximated eigenstate $\psi_2$. Right: Adaptively refined mesh after 9 refinements.}
	\label{fig:potwells_psi2}
\end{figure}

\begin{figure}
	\hfill
	\includegraphics[width=0.49\textwidth]{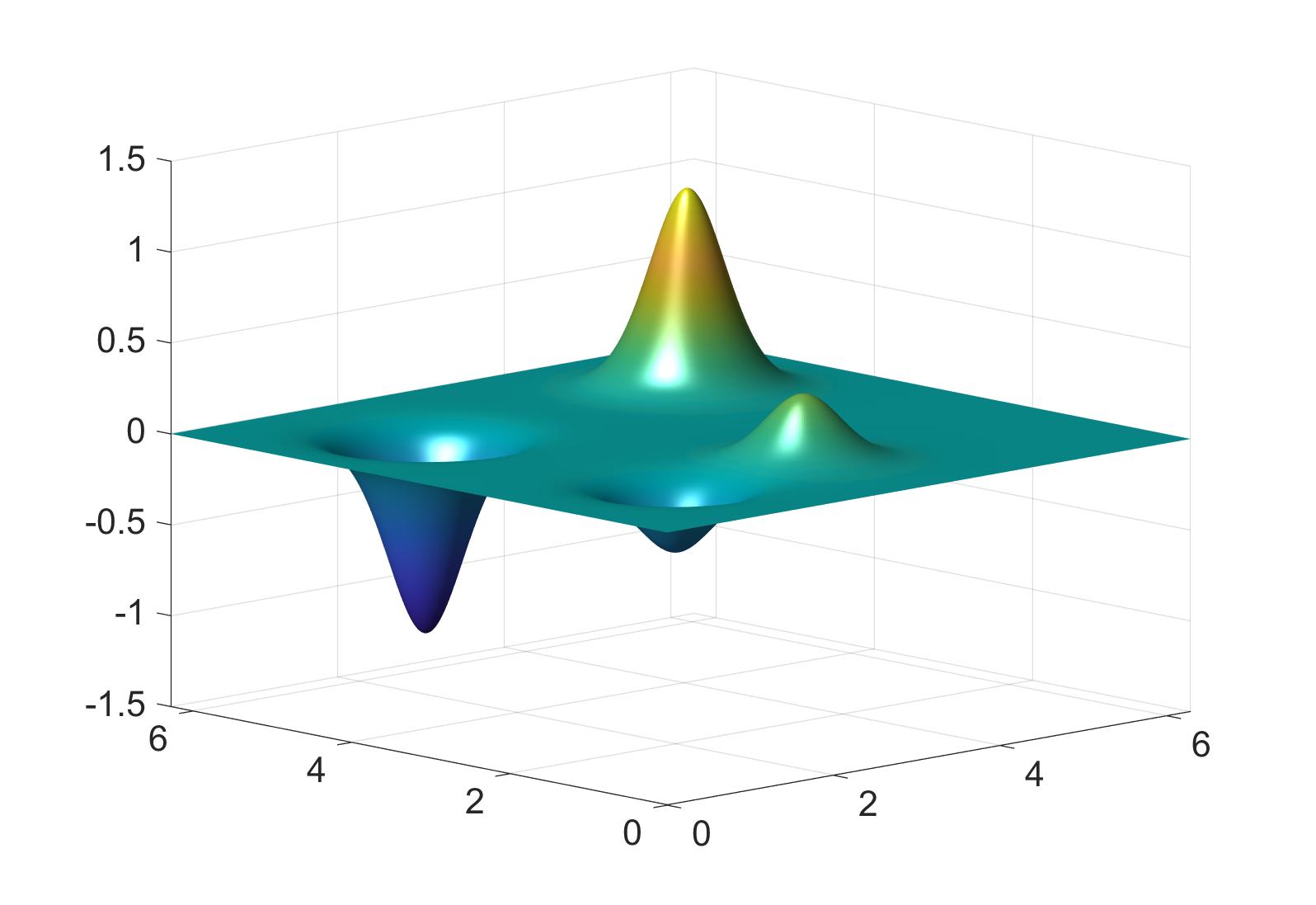}
	\hfill
	\includegraphics[width=0.49\textwidth]{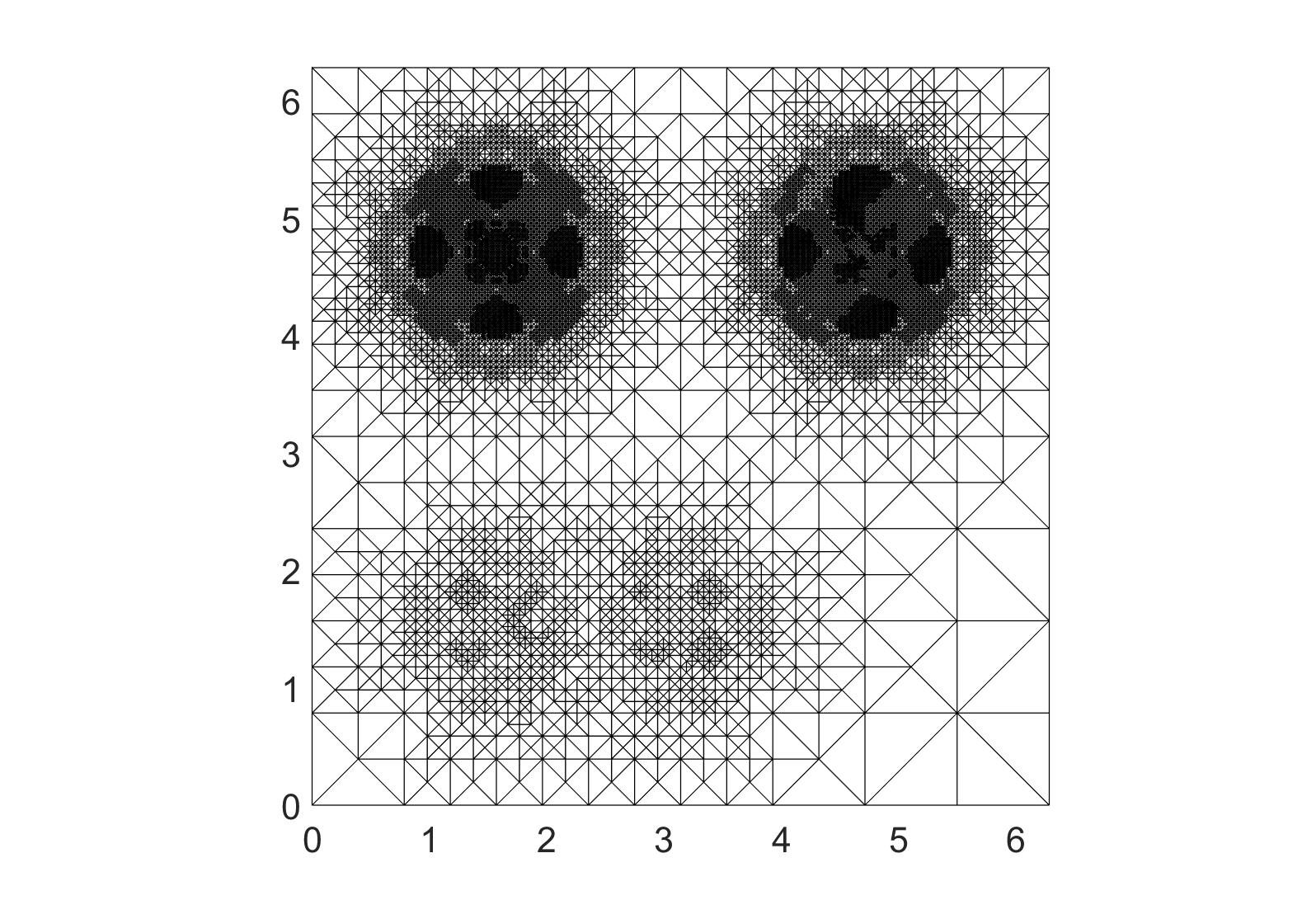}
	\hfill
	\caption{Experiment~\ref{sec:potwells} (b). Left: Approximated eigenstate $\psi_3$. Right: Adaptively refined mesh after 9 refinements.}
	\label{fig:potwells_psi3}
\end{figure}

\subsection{Schr\"{o}dingers equation with singular potential $V$.} \label{sec:singular}

In our last example, we consider the potential $V(\bm{x})=(2|\bm{x}|)^{-1}$ which features a severe point singularity at the origin~$\bm{0}=(0,0)$, cp.~\cite[Exp.~4.3]{HeidStammWihler:19}. Then, the corresponding functional is given by
\begin{align*}
 \E(u)=\frac14\int_\Omega \left( |\nabla u|^2+|\bm{x}|^{-1}|u|^2\right) \dx,
\end{align*}
with $\Omega := \left(\nicefrac{-1}{2},\nicefrac{1}{2}\right)^2$. As in the experiment before, we always choose the initial state $\psi_0^0:=\mathrm{P}_0^{\psi_1^m}(\widetilde \psi_0^0) \norm{\mathrm{P}_0^{\psi_1^m}(\widetilde \psi_0^0)}_{\Lom}^{-1}$, where $\widetilde{\psi}_0^0 \in \X_0$ is such that $\widetilde \psi_0^0(\bm{x})=c$ for any node $\bm{x}$ in the interior of the corresponding initial mesh $\mathcal{T}_0$ for some constant $c >0$. We run the Algorithm~\ref{alg:adaptive} three times in succession in order to approximate, based on one another, eigenstates $\psi_2,\psi_3$ and $\psi_4$. Figures~\ref{fig:Singular2}--\ref{fig:Singular4} (left) indicate that the decay rate of the residual bound~\eqref{eq:residualbound} in the Experiments~\ref{sec:singular} (a)--(c) is once more (close to) optimal. The corresponding eigenstates are displayed in Figures~\ref{fig:Singular2}--\ref{fig:Singular4} (right). It seems that the approximated eigenstates $\psi_2$ and $\psi_3$ coincide up to rotation, compare Figure~\ref{fig:Singular2} (right) and~\ref{fig:Singular3} (right).
 
\begin{figure}
	\hfill
	\includegraphics[width=0.49\textwidth]{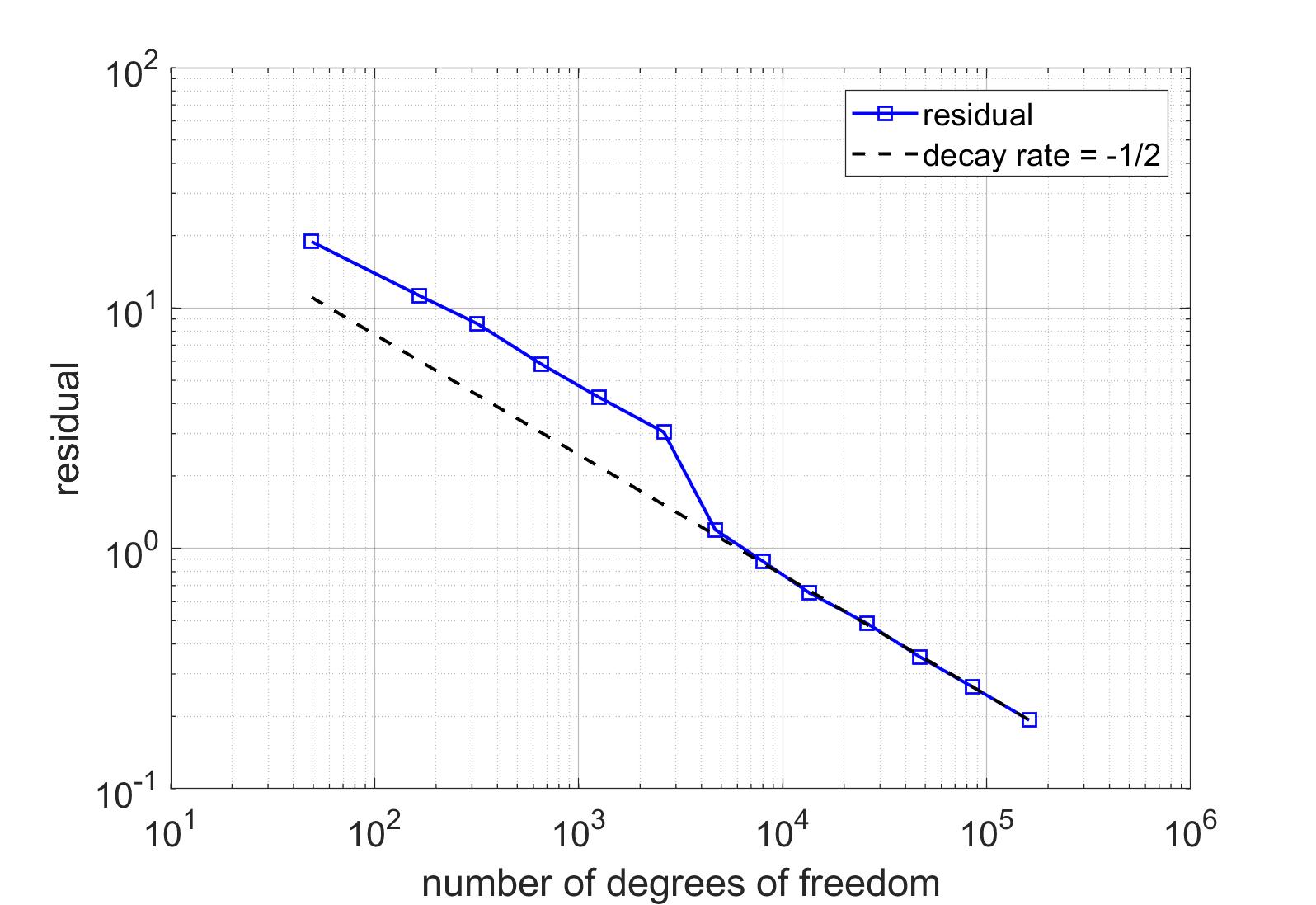}
	\hfill
	\includegraphics[width=0.49\textwidth]{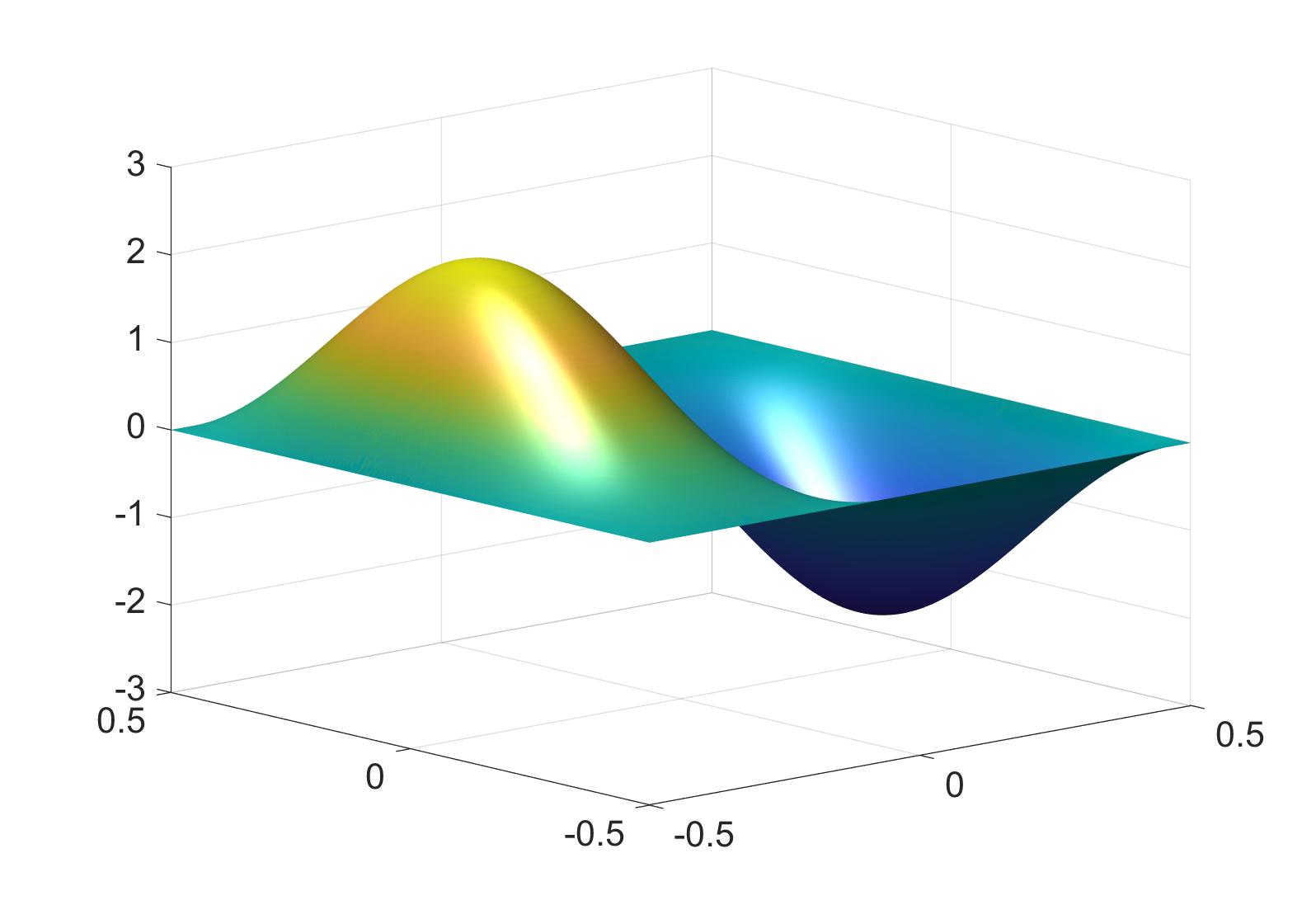}
	\hfill
	\caption{Experiment~\ref{sec:singular} (a). Left: Residual decay for the eigenstate approximation of $\psi_2$. Right: Approximated eigenstate $\psi_2$.}
	\label{fig:Singular2}
\end{figure}

\begin{figure}
	\hfill
	\includegraphics[width=0.49\textwidth]{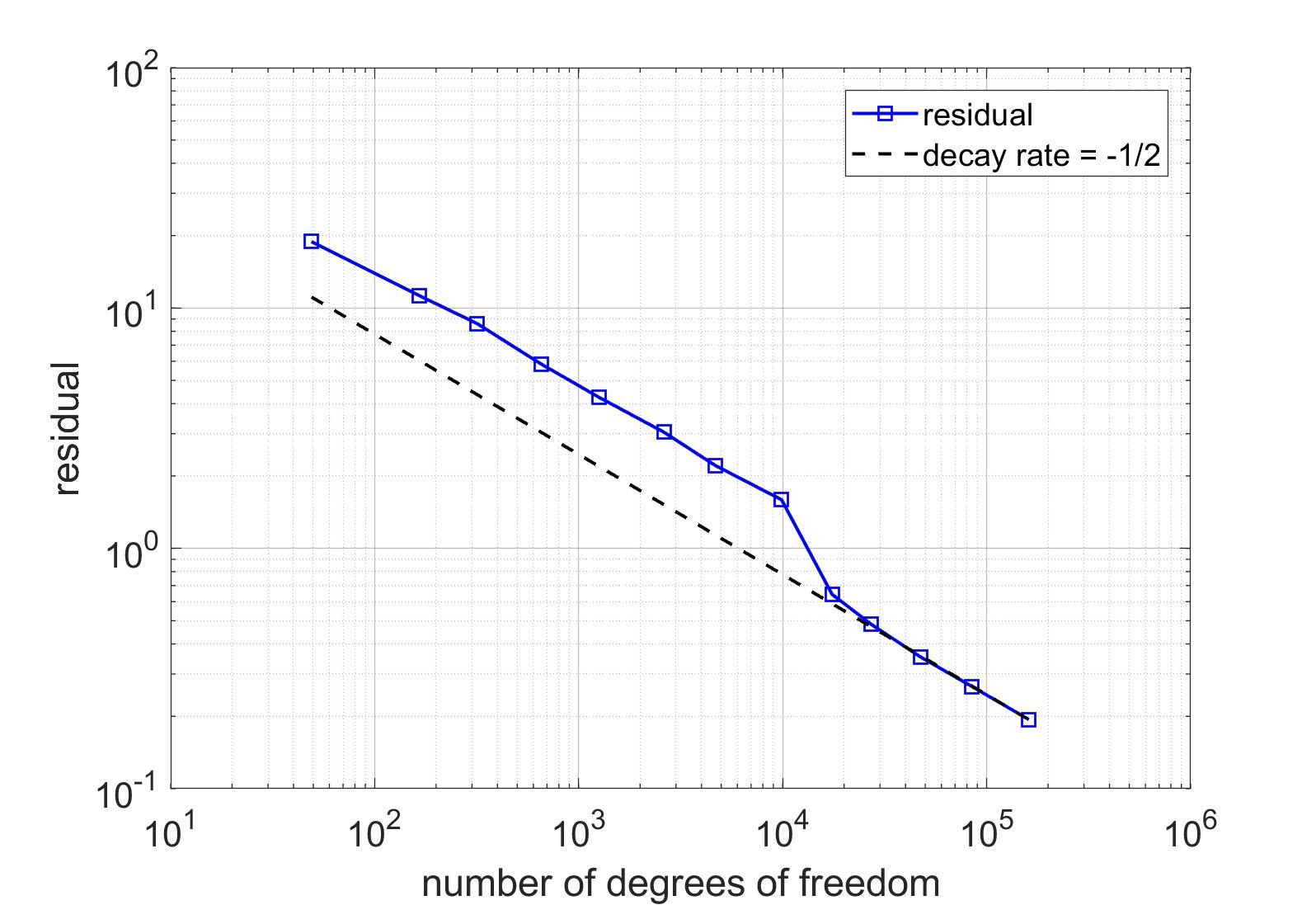}
	\hfill
	\includegraphics[width=0.49\textwidth]{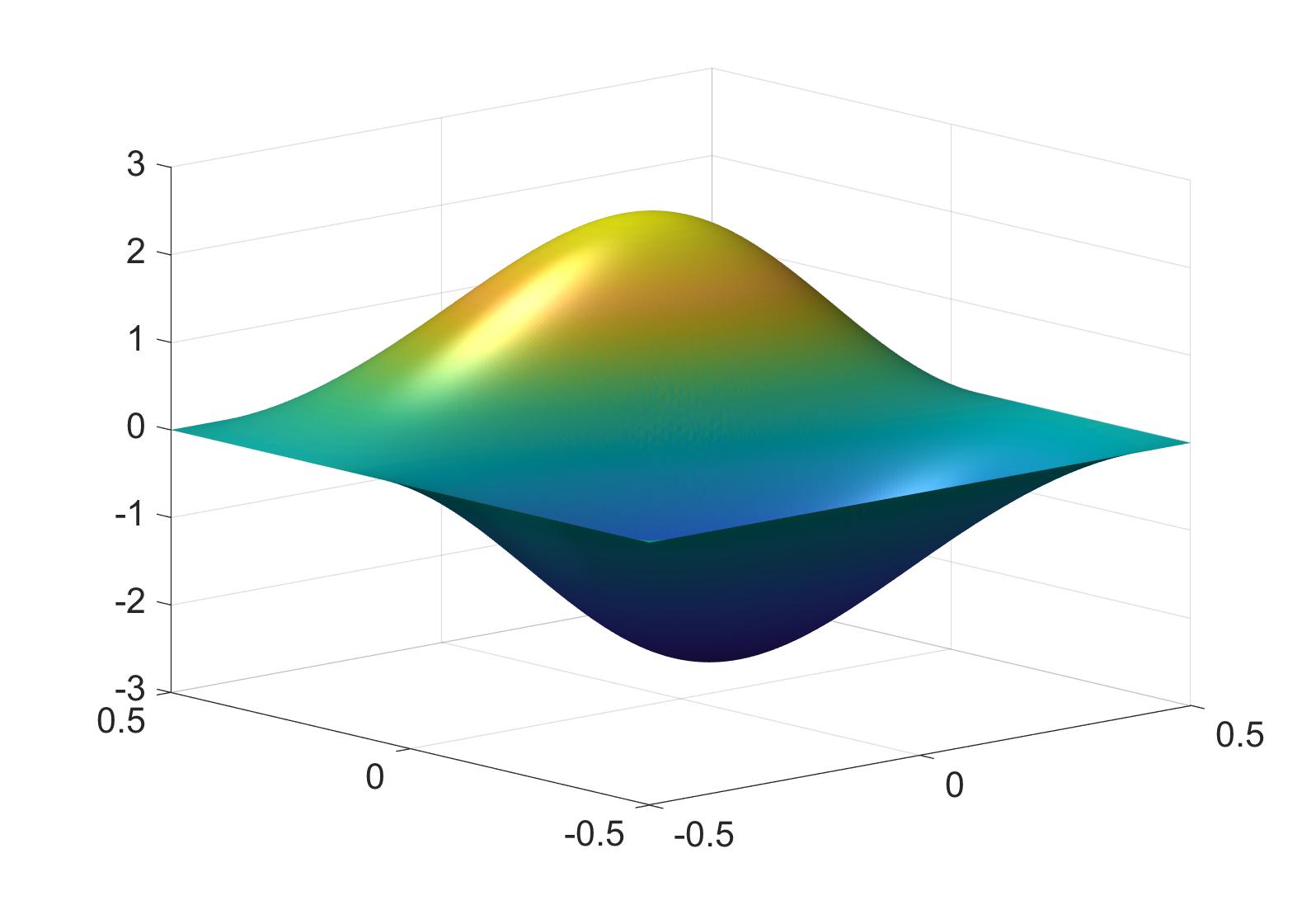}
	\hfill
	\caption{Experiment~\ref{sec:singular} (b). Left: Residual decay for the eigenstate approximation of $\psi_3$. Right: Approximated eigenstate $\psi_3$.}
	\label{fig:Singular3}
\end{figure}

\begin{figure}
	\hfill
	\includegraphics[width=0.49\textwidth]{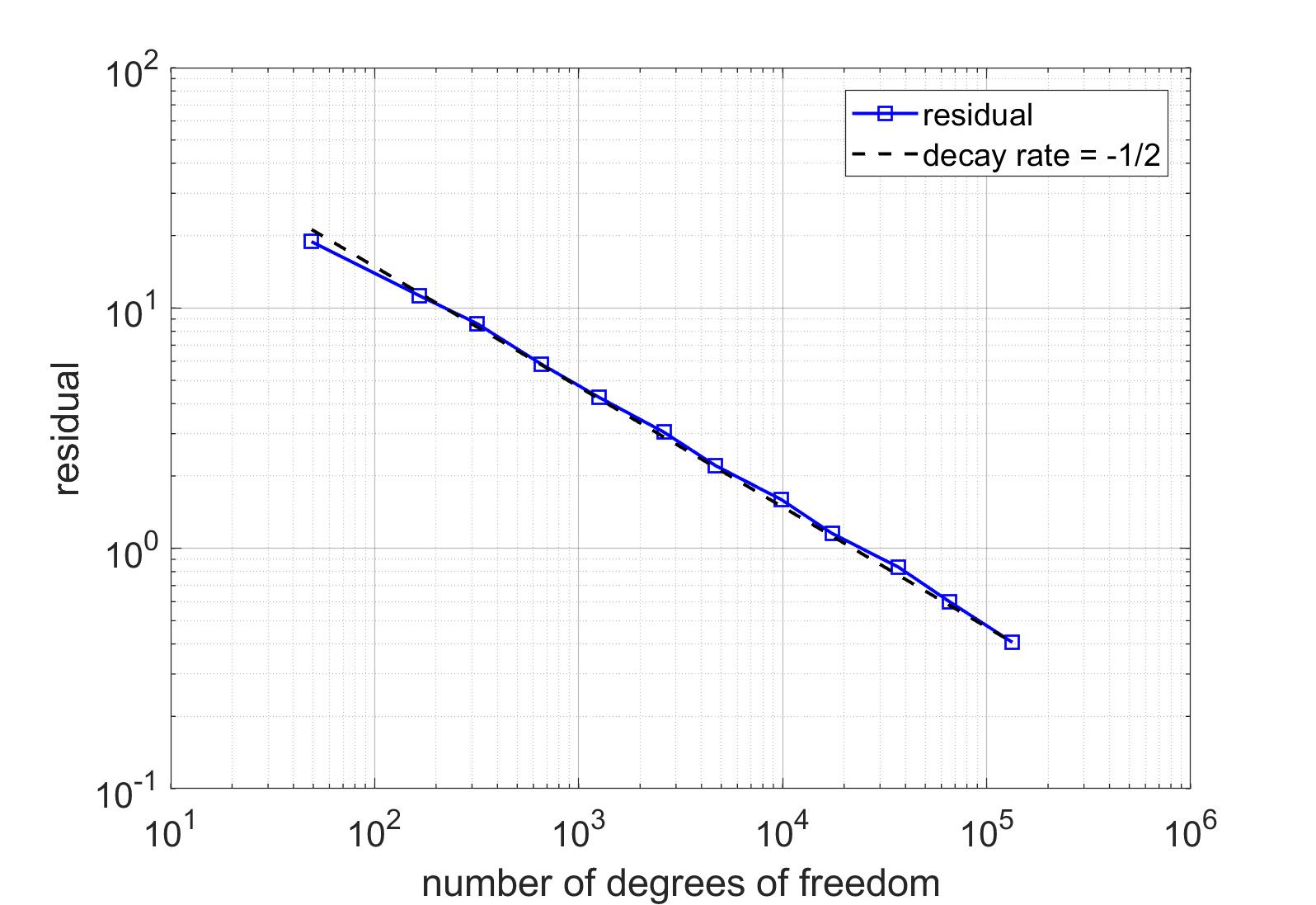}
	\hfill
	\includegraphics[width=0.49\textwidth]{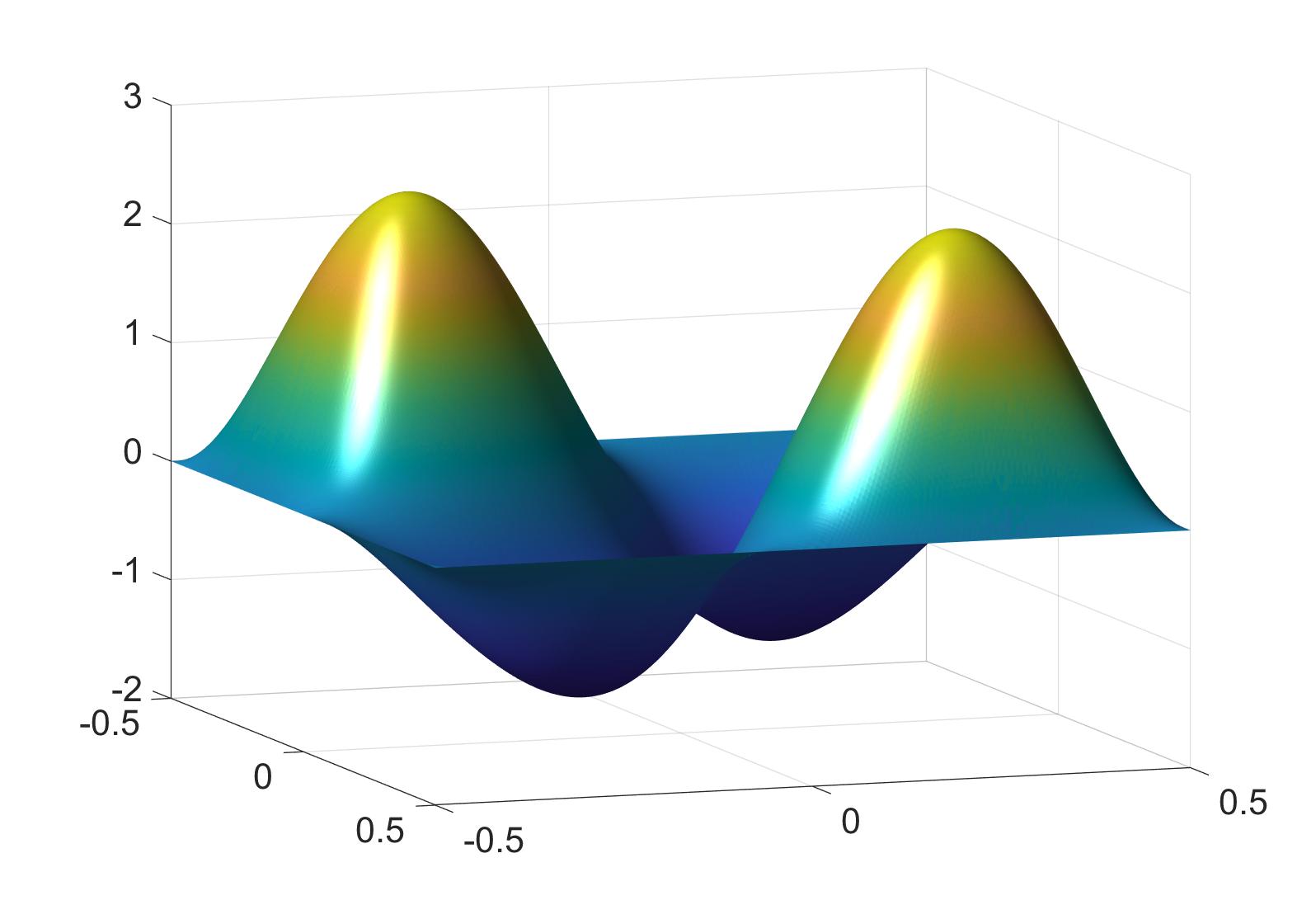}
	\hfill
	\caption{Experiment~\ref{sec:singular} (c). Left: Residual decay for the eigenstate approximation of $\psi_4$. Right: Approximated eigenstate $\psi_4$.}
	\label{fig:Singular4}
\end{figure}

\section{Conclusion} \label{sec:conclusion}

In this work we demonstrated that the computational scheme from~\cite{HeidStammWihler:19}, with some minor modifications, does well apply for the numerical approximation of excited states of Schr\"{o}dingers equation. The algorithm employs an adaptive interplay of gradient flow iterations and finite element discretisations, both of which rely on an energy minimisation. Even though the generated sequence theoretically satisfies the  orthogonality constraint in each step, a proper projection should be applied in praxis. The numerical tests illustrate that the adaptive algorithm exhibits either optimal or close to optimal convergence rates by properly resolving local features of the eigenstates.

\bibliographystyle{amsplain}
\bibliography{references,GPrefs}

\end{document}